\newcommand{\R}{\mathbb{R}}
\newcommand{\E}{\mathbb{E}}
\newcommand{\Pb}{\mathbb{P}}
\newcommand{\ep}{\varepsilon}
\newtheorem{thm}{Theorem}
\newtheorem{rema}{Remark}
\begin{document}

\title{Ornstein-Uhlenbeck type processes\\ with heavy distribution tails}

\author{K.\ Borovkov\footnote{Department of Mathematics and Statistics, The University
of Melbourne, Parkville 3010, Australia. E-mail: borovkov@unimelb.edu.au}
 \ and\ G.\ Decrouez\footnote{Department of Mathematics and Statistics, The University of
Melbourne, Parkville 3010, Australia. E-mail: dgg@unimelb.edu.au}
 }

\maketitle

\begin{abstract}
We consider a transformed Ornstein-Uhlenbeck process model that can be a good candidate
for modelling real-life processes characterized by a combination of time-reverting
behaviour with heavy distribution tails. We begin with presenting the results of an
exploratory statistical analysis of the log prices of a major Australian public
company, demonstrating several key features typical of such time series. Motivated by
these findings, we suggest a simple transformed Ornstein-Uhlenbeck process model and
analyze its properties showing that the model is capable of replicating our empirical
findings. We also discuss three different estimators for the drift coefficient in the
underlying (unobservable) Ornstein-Uhlenbeck  process which is the key descriptor of
dependence in the process.

\medskip\noindent
{\em Keywords:} Ornstein-Uhlenbeck process, heavy tails, regular variation, rank
correlation, Gauss copula, log returns modelling.

\medskip\noindent
{\em AMS 2010 Subject Classification:} 62M05; 60J70, 60J60, 62M10.
\end{abstract}

\section{Introduction}

There exist a large number of mathematical models designed to reproduce the
dynamics of financial data time series and aimed at capturing the key features
of their behaviour, one of the most important of them being heavy distribution
tails. A recent monographic reference presenting an up-to-date overview of the
field is~\cite{AnDaKrMi},  two further monographic references devoted
specifically to various aspects of using heavy-tailed distributions
being~\cite{Ra,AdFeTa}. Despite the abundance of (sometimes rather
sophisticated) such models, one cannot claim that there is single widely
accepted satisfactory model (or even class of models) for the behaviour of the
log prices of traded financial assets. Among the main reasons for that is the
ever-changing economical environment that must have substantial effect on the
dynamics of the financial time series and hence keeps making the already
verified and fitted models obsolete. Furthermore, different uses (which may
include options pricing and/or forecasting, at different time scales) of the
models stipulate different requirements on their structure and properties,
which means there can hardly be a single answer to all the requests. Hence the
continuing interest in considering alternative mathematical models that may be
successfully used to either get insights in the nature of the log price
dynamics or even solve some practical applied problems.

The present note may be considered as a follow-up to a relatively recent
paper~\cite{JePe} which analysed a class of rather simple diffusion models for the log
prices. The class consists of transformed Ornstein-Uhlenbeck (OU) processes of the form
$Y_t= h(X_t),$ where $h$ is a smooth enough strictly increasing function,
\begin{equation}
 \label{ou1}
dX_t = -\alpha X_t dt + \tau dW_t,  \qquad t\ge 0,
\end{equation}
$W_t$ is the standard Wiener process, and $\alpha, \tau >0$ are constants. Clearly,
both transition probabilities and stationary distribution are readily available for the
process $Y_t$, and it was shown in~\cite{JePe} that this class of models (i)~allows a
closed-form expression for the likelihood function of discrete time observations,
(ii)~allows the possibility of heavy-tailed observations and (iii)~allows the analysis
of the tails of the increments. The paper also discussed fitting the model to the log
share prices, via approximating by it the hyperbolic diffusion process considered
in~\cite{BiSo}, where the latter model was found  to possesses a number of properties
encountered in empirical studies of stock prices and was ``rather successfully fitted
to two different (Danish) stock price data sets".

However, despite one of the listed in~\cite{JePe} motivations for considering the class
being the need to be able to model heavy tailed data, the paper only goes as far as to
show (Proposition~2) how to choose $h$ to obtain tail behavior of the form
\[
\Pb (Y_t > x) = x^{c}\exp (- a x^\delta + b) (1 + O(x^{-1})), \qquad x\to \infty,
\]
with $\delta \in [1,2],$ $a >0$ and $b,c\in \R$, while Theorem~3 of the paper gives
conditions on $h$ under which the transition density will have exponential decay at
infinity. This kind of tails can hardly be classified as being heavy. In fact, as it is
well-known and will be confirmed in our Section~\ref{Sec2} below,  one often deals with
a power function tail decay for real-life data sets.

The main objectives of the present paper include eliciting conditions on $h$ under
which the distribution tails (for the stationary law of $h(X_t)$, the distributions of
the increments $h(X_t)-h(X_s)$ and the transition probabilities) for the process $Y_t=
h(X_t)$ will have regularly varying behaviour at infinity thus agreeing with empirical
observations, and showing that the dependence structure of such a model is also
consistent with a typical Australian stock log price behaviour. We used BHP Billiton
Ltd prices from 2003--2010 for illustration purposes, but obtained similar results when
doing our calculations for several other major companies, both from mining and other
sectors of the economy as well, that are listed on the Australian Securities Exchange
(ASX), including the ANZ Banking Group Ltd and TOLL Holdings Ltd.

Further arguments in favour of the proposed simple model include the fact that there is
good Gaussian fit for the empirical copulas of the process' values, and the observation
that the parameter $\alpha$ characterising the dependence structure of the process can
be easily estimated using  rank correlations techniques, and that these estimates
computed from the time series increments for different time lags agree with each other.

One of the important aspects of mathematical modelling of time series is the choice of
the time scale which the modelling will be aimed at. Firstly, the dynamics at different
scales can be different, as the key driving factors at these scales may be
different\,---\,and changing because of changing environment, as it may be happening
with the small scale picture over the last years, due to the wide spread of fast
algorithmic trading. Secondly, capturing the detail of the small-scale behaviour of
prices can be of little help if one is interested in medium or long-term modelling. In
our study, we are mostly interested in medium-term modelling, for daily data with time
horizons of the order of~$10-50$ days. The empirical data we used seem to confirm the
appropriateness of the suggested model: even though some of its parameters may be
changing with time, it appears that its structure remains consistent with observations.

For further reading and references concerning using OU processes for modelling
of financial time series, the interested reader is referred to~\cite{MaMuSz}.

The paper is organised as follows. In Section~\ref{Sec2} we present the results of an
exploratory analysis of the BHP Billiton Ltd log prices time series, to elicit the key
features that successful candidate models (for time horizons around~10 days) must
possess (tail behaviour, dependence structure). Section~\ref{Sec3} presents a
theoretical analysis of the model $Y_t= h(X_t)$, $\{X_t\}$ following~\eqref{ou1},
demonstrating conditions on $h$ that lead to regularly varying distribution tail
behaviour and also showing that the process' dependence structure (in terms of rank
correlations) is consistent with the real life data properties discussed in
Section~\ref{Sec2}. Section~\ref{sec_com} contains a number of comments (mostly
concerned with the estimation of~$\alpha$) on fitting the suggested model to data.
Section~\ref{Sec_proofs} presents the proofs of the theorems formulated in the paper.

\section{Distributional properties of log returns}
\label{Sec2}

A lot of work has been done on studying the distributional properties of the stock
(log) price time series, and we refer the interested reader
to~\cite{AnDaKrMi,AdFeTa,Ra,HeLe} and the numerous references therein for illuminating
discussions of the past findings. However, for the purposes of this research, we
decided to do new statistical analysis of a single stock price time series, with a view
of looking at the aspects of the empirical data set that can help us clarify the
suitability of the transformed OU process model. In this section, we use the time
series of daily (closing) prices $S_t$ for BHP Billiton Ltd stock as quoted by the ASX,
$t=0, 1,\ldots, N=2061,$ the day $t=0$ being 1~January 2003, $t=N$ corresponding to
17~December 2010, to elicit the typical distributional properties of the log prices
$Z_t:= \ln S_t$. As we pointed out earlier, we did the  same calculations for the stock
prices of several other major Australian companies from different sectors of economy
(e.g.\ ANZ Banking Group Ltd and TOLL Holdings Ltd), which yielded quite similar
results, so it appears that the data used in this section may be deemed to be typical.

All the data analysis and graphics were done in {\sc Matlab.}

\subsection{The tail behaviour}
\label{Sec2_1}

The usual starting point in data analysis of this kind is to make normal
Q-Q-plots. Since we are interested in a dynamic model for the data, we do that
for the increments
\[
\Delta_\delta Z_t:=Z_t-Z_{t-\delta},\qquad t=\delta,\delta+1,\ldots, N,
\]
for different values of $\delta$. Clearly, $\Delta_\delta Z_t$ is the log
return of the stock over a period of time of length $\delta$ ending at time
$t$. A typical result is shown in Fig.~\ref{fig_qq} displaying the plot for
$\delta=20$: there is a very good straight line fit in the central part
(representing more than 95\% of all data, so that the bulk ``middle part" of
the distribution looks pretty much normal), and then in the end regions there
are clear deviations from the straight  line indicating the presence of heavy
tails. Different time lag values in the range up to $\delta =100$ return rather
similar pictures.

\begin{figure}[ht] 
\centering
   \includegraphics[width=8cm]{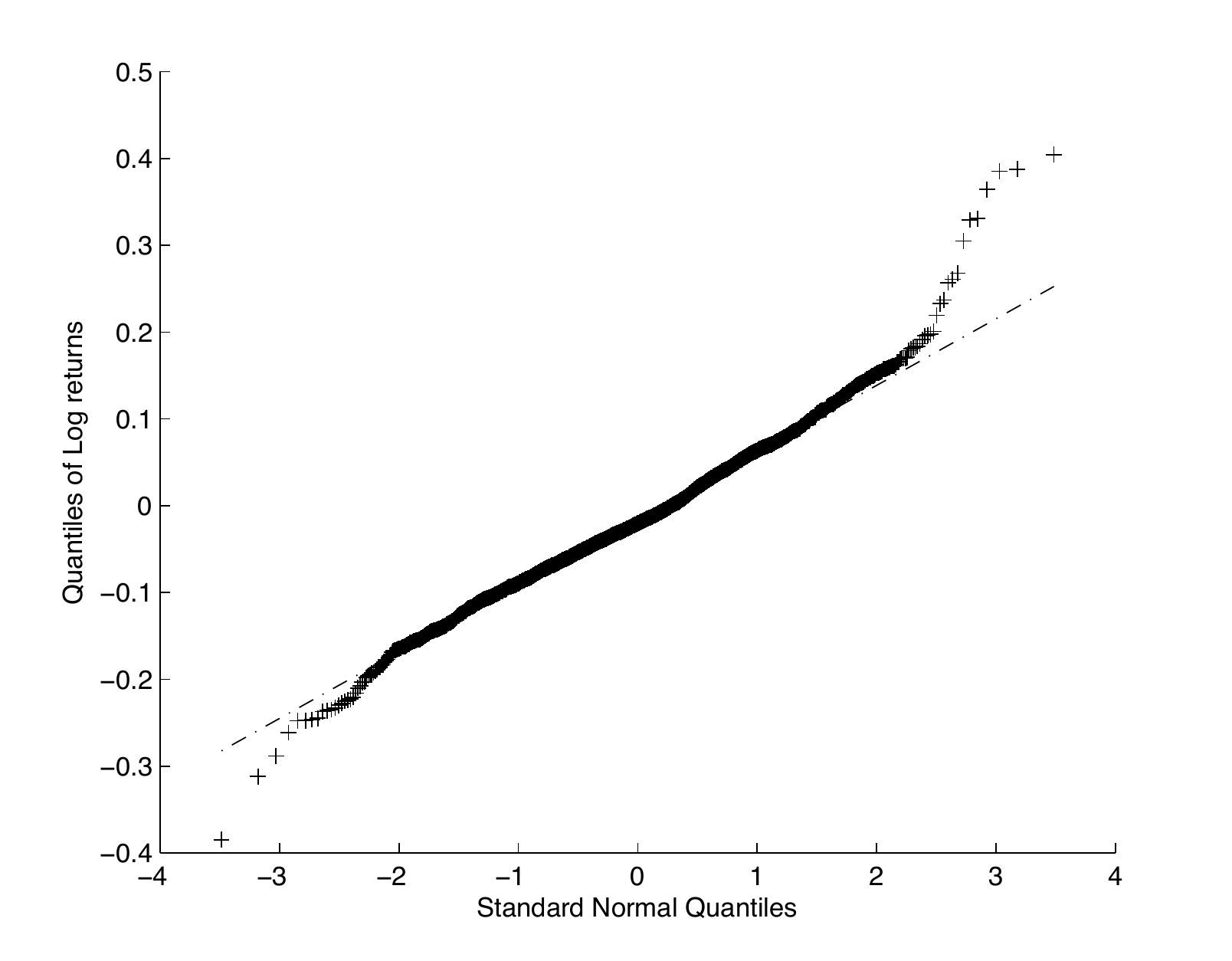}
 \caption{\small The normal Q-Q plot for the 20 days log-returns $\{\Delta_{20} Z_t\}$.}
\label{fig_qq}
\end{figure}


To elicit the character of the tail behaviour of the empirical distributions, one can
use the log-log plots of their tails. Figure~\ref{fig:lr} displays the natural
logarithms of the empirical distributions' left and right tails for the log returns
$\{\Delta_\delta Z_t\}$ against the logs of the data values, for four different time
lag values:  $\delta = 1,5,10$ and 20. The key common features of the plots are: a nice
parabolic shape in the ``middle region" (which indicates a good normal fit in that
region), and then a very good straight line fit for the ``remote regions" (comprising
about 5\% of all data) in all the cases, indicating the power decay of the
distributions. An interesting observation is that, for each of the right tails, the
slope of the fitting straight line (and hence the index of the approximating power
function) is one and the same, for all time lag values.

\begin{figure}[ht] 
\centering
\begin{tabular}{c}
   \includegraphics[width=7cm]{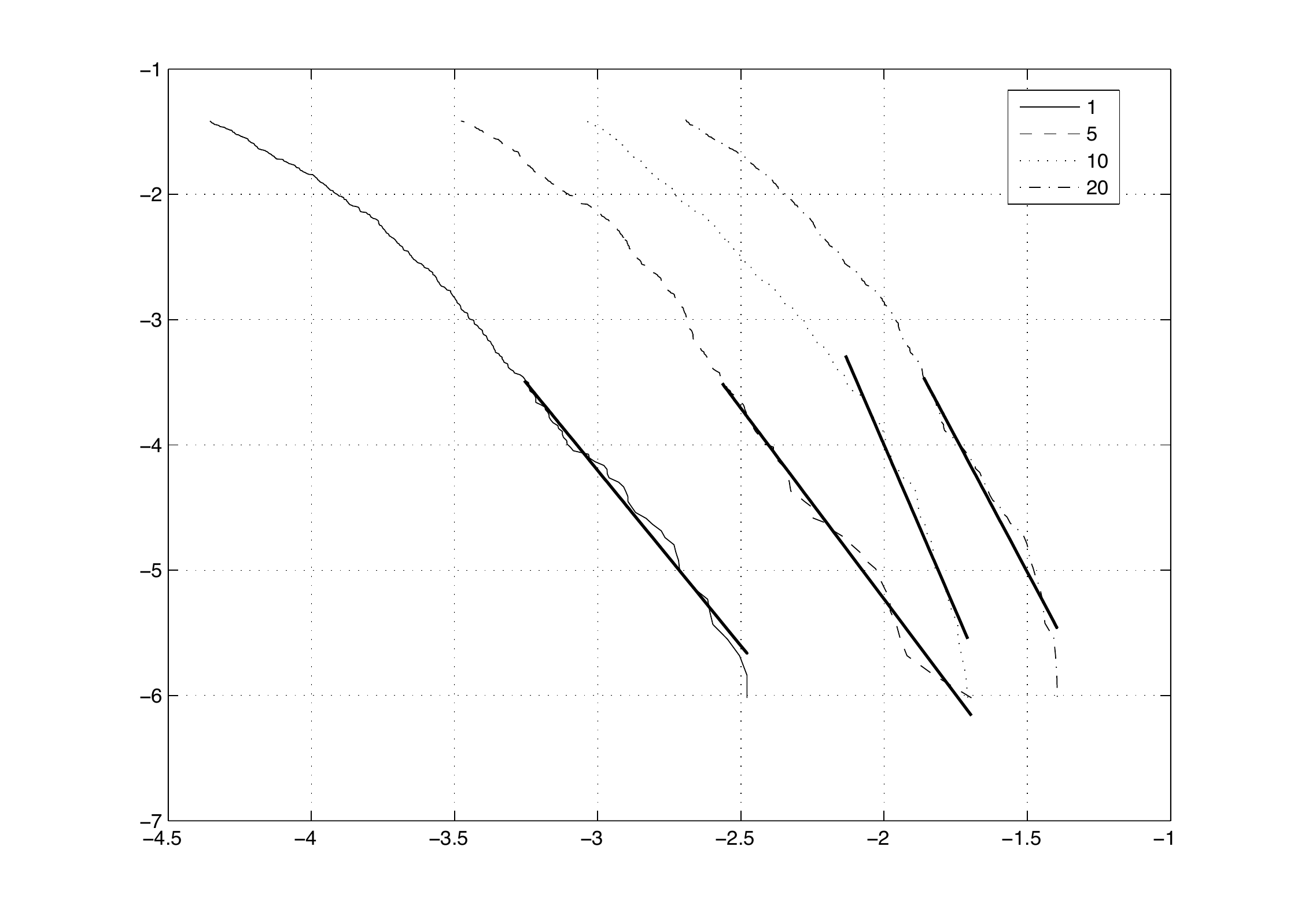}
   \includegraphics[width=7cm]{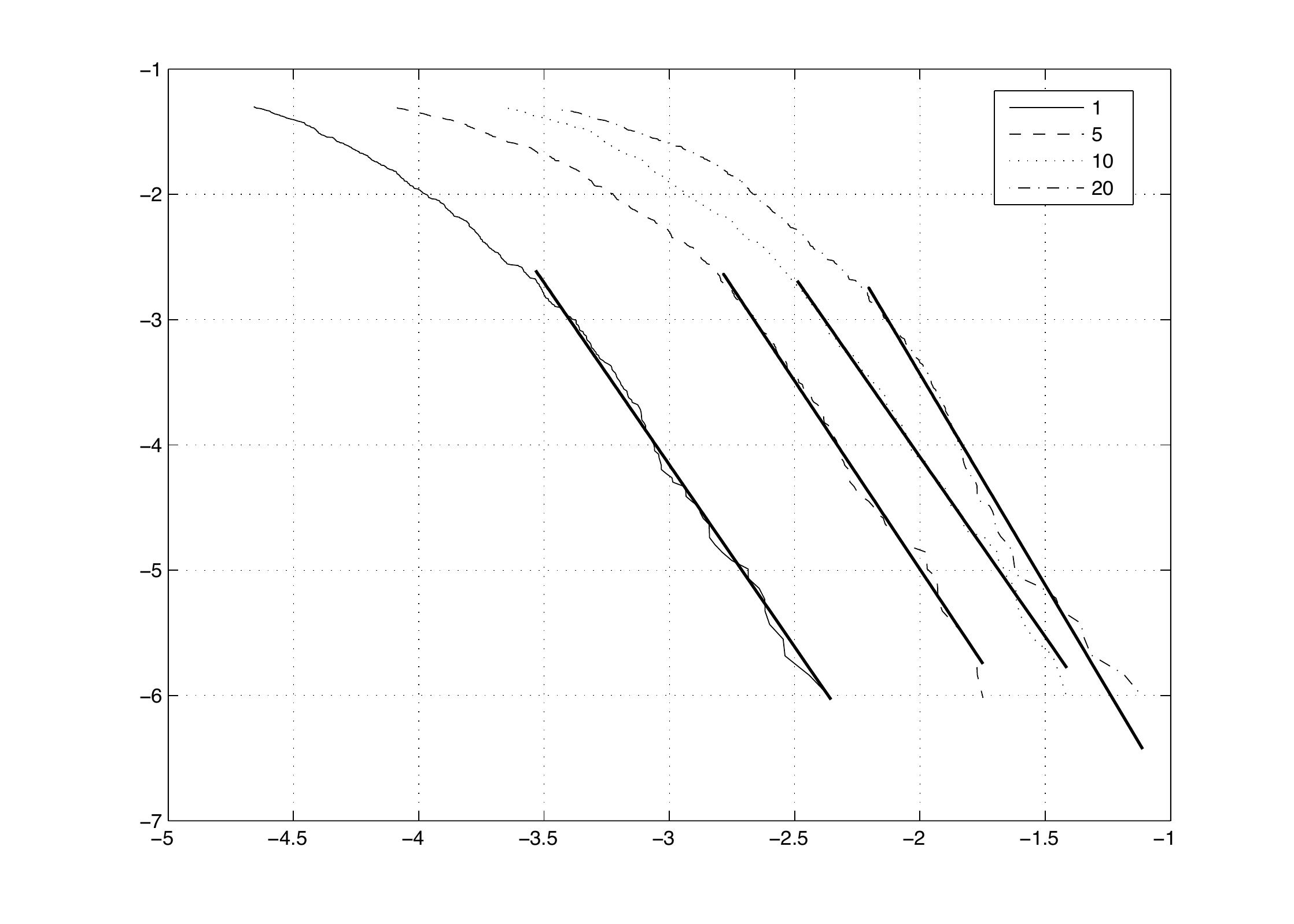}\\
\end{tabular}
 \caption{\small The log-log plots of the empirical distributions' tails of the BHP stock price log returns for time periods
 of $\delta= 1,5,10, 20$ (days).
 The left (right) pane presents the plots for the left (right, resp.) tails. Straight lines are fitted in each
 case using the mean squares (the total number of observations from these regions comprise about 5\% of all data).}
\label{fig:lr}
\end{figure}

The numerical (absolute) values $\tilde\beta_\pm (\delta)$ of the slope coefficients of
the fitted straight lines for different values of~$\delta$ are given in
Table~\ref{tab_1} (the subscripts $+/-$ correspond to the right/left tails). The table
also shows the values of the Hill estimators $\hat \beta_{\pm,\{p\}} (\delta)$ for the
exponents of the power functions specifying the tail decay rate~\cite{Hill}. Recall
that, under the assumption that the right tail of the theoretical distribution of the
data $V_1, \ldots, V_n$ with order statistics $V_{1,n}\ge \cdots\ge V_{n,n}$ has a
regularly varying form $x^{-\beta_+} L(x),$ $L(x)$ being a slowly varying function as
$x\to\infty$, the Hill estimator $\hat \beta_{+,\{p\}} $ of $\beta_+$ is computed
according to
\begin{equation*}
\frac1{\hat{\beta}_{+,\{p\}}}  = \frac{1}{k-1}\sum\limits_{i=1}^{k-1}\ln \frac{V_{i,n}}{V_{k,n}}, \qquad p=100\,\frac{k}{n} .
\end{equation*}
Note that the estimator was shown to be consistent (as $n\to\infty$, $p \to 0$)
for observations forming a strongly mixing sequence~\cite{Root90}.

Observe that the estimates obtained by both methods are generally in good agreement,
especially for the right tail  which appears to be ``heavier". Note that for that tail
the values of the estimates for the regular variation index remain roughly the same for
all $\delta$ values considered.

\begin{table}[h]
\centering
\begin{tabular}{|c|c|c|c|c|c|}
\hline
    &  $\delta$ & 1 & 5 & 10 & 20 \\
     \hline
Estimators  & $\tilde\beta_- (\delta) \vphantom{\displaystyle\sum}$   & 2.75 & 3.28 & 5.29 & 4.29 \\
for the & $\hat\beta_{-,\{2\}} (\delta) \vphantom{\displaystyle\sum}$   & 2.88 & 2.91 & 5.93 & 4.09 \\
left tail: & $\hat\beta_{-,\{5\}} (\delta) \vphantom{\displaystyle\sum}$   & 2.80 & 3.42 & 3.78 & 4.53 \\
    \hline
Estimators   &  $\tilde\beta_+ (\delta) \vphantom{\displaystyle\sum}$   & 2.91 & 3.00 & 2.87 & 3.37 \\
for the  &$\hat\beta_{+,\{2\}} (\delta) \vphantom{\displaystyle\sum}$   & 3.05 & 3.47 & 3.13 & 3.55 \\
right tail: & $\hat\beta_{+,\{5\}} (\delta) \vphantom{\displaystyle\sum}$   & 2.94 & 3.11 & 2.85 & 3.24 \\
\hline
\end{tabular}
 \caption{\small Estimates for the exponents of the power functions describing the empirical distribution tail decay for
 log returns over periods of different lengths~$\delta$.
 Here $\tilde \beta_\pm$ are obtained as the slope coefficients of the fitted straight lines in Fig.~\ref{fig:lr}, while
 $\hat\beta_{\pm,\{p\}}$ are Hill estimators based on the left-most/right-most $p$\% of the data.}
\label{tab_1}
\end{table}

\subsection{Rank correlations}
\label{rancor}

Now we will turn to analysing the dependence structure of our time series. The
standard approach for such a task is based on working with linear correlation
coefficients. However, as we saw in Section~\ref{Sec2_1}, we are dealing here
with heavy-tailed data for which using linear correlations may be unwise. For
that reason, and also because our intention is to use for modelling purposes a
transformation of a Gaussian process with a simple dependence structure, we
will prefer to analyse rank correlation coefficients in this section (as they
are invariant under strictly increasing transformations and thus may be useful
for doing statistical inference for the underlying unobservable process).

As is well known, the values of the two most popular rank correlation measures,
Kendall's tau and Spearman's rho, are quite close to each other (which was also
confirmed by the data we deal with in this section). Because computing the
latter requires less computational effort, we chose to work with Spearman's
rho.

Recall (see e.g.\ \cite{McNeil}, pp.\,207, 229) that the theoretical value of
Spearman's rank correlation coefficient $\rho_S$ for a pair of random variables
$U,V$ is defined by
\begin{equation}
 \label{rhoS}
\rho_S(U,V ) = \rho\left(F_U(U), F_V(V )\right),
\end{equation}
where $F_U$ and
$F_V$ are the (marginal) distribution functions of $U$ and $V$, resp., and
$\rho$ is the linear correlation. The standard estimator of $\rho_S(U,V )$ from
a sample $(U_j,V_j),$ $j=1,\ldots,n,$ is calculated using the ranks of the
variables within the respective univariate samples as
\begin{equation}
 \label{rhoSest}
\hat{\rho}_S :=\frac{12}{n(n^2-1)}\sum\limits_{j=1}^{n} \left(\textrm{rank}(U_j)-\frac{{n+1}}{2} \right)\left(\textrm{rank}(V_{j })-\frac{n+1}{2}  \right).
\end{equation}

It is clear that calculating rank correlations for the whole sample of daily
data over the eight year long period, without careful detrending and possibly
some further data pre-processing, would hardly be meaningful. Furthermore, as
we pointed out earlier, we would be most interested in modelling the price
processes over medium-term time intervals (about 100 days long), over which one
can expect data to follow a reasonably stationary process. And indeed,
computing correlations for data from time intervals of such lengths leads to
rather stable results. A typical representative is depicted in
Fig.~\ref{fig_logprice_corr}, showing Spearman's rho values for the data
consisting of the pairs $(Z_t, Z_{t+k})$, $t=0,1,\ldots, 200-k,$ for values
$k=1,\ldots, 35$. One can observe that the plotted line looks pretty much like
an exponential curve, of the form $e^{-ck}$, a behaviour typical of
autoregressive processes of the first order (or  OU processes in continuous
time).

\begin{figure}[ht]  
\centering
   \includegraphics[width=8cm]{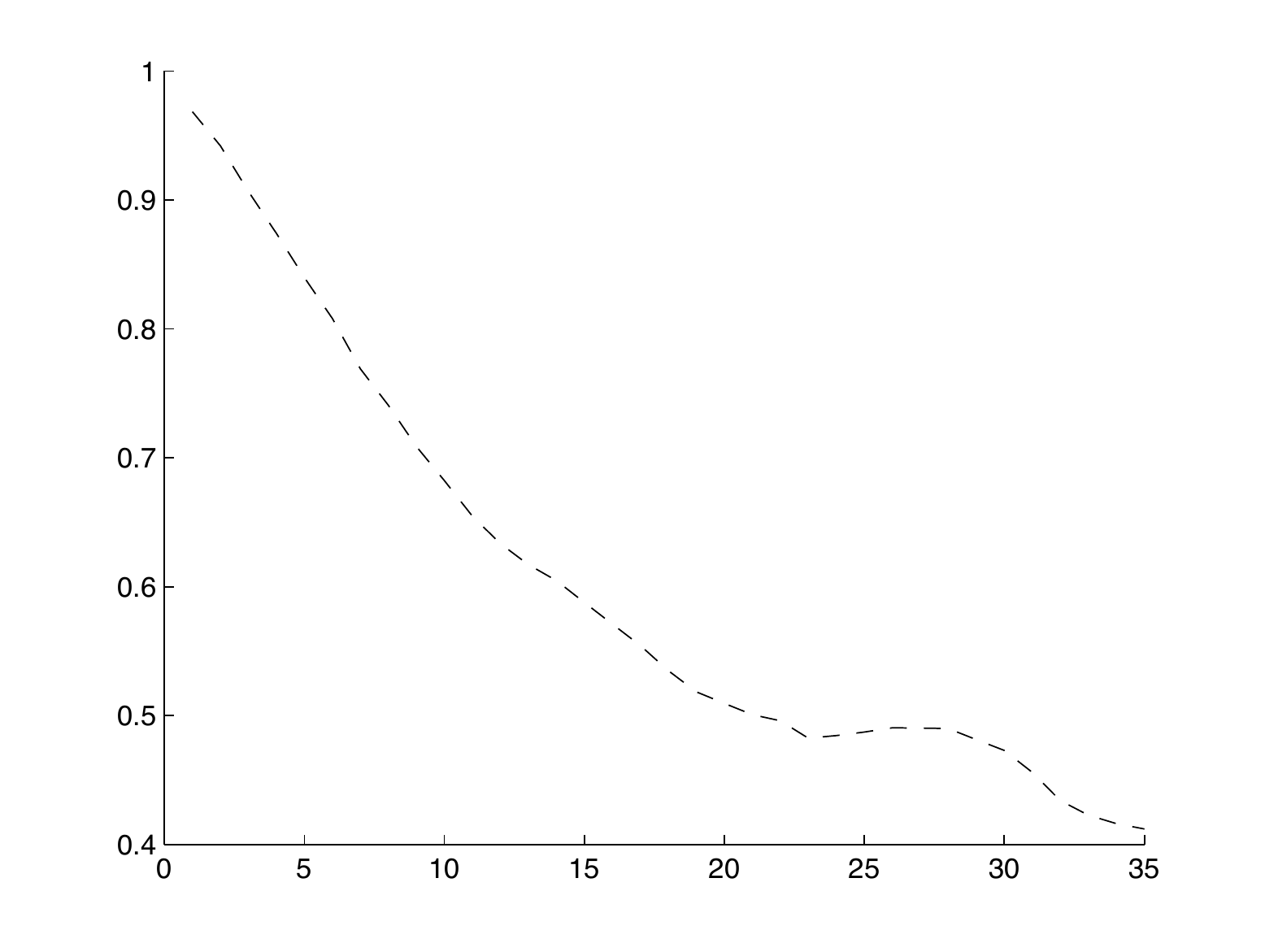}
 \caption{\small Spearman's rho for $(Z_t, Z_{t+k})$, $k=1,\ldots, 35$, from the first 200 data points.}
\label{fig_logprice_corr}
\end{figure}

For the {\em increments\/} of the time series values, i.e.\ the log returns
over time periods of a fixed length~$\delta$, the picture is even more
interesting. Following our general approach, here we will pool data from the
whole eight year long period of observations, as the influence of the possible
(slow) trend present in the time series of the values of the increments of
$Z_t$ over relatively short lags would be quite small. Fig.~\ref{fig_incr_corr}
shows three plots of Spearman's rho's values calculated for the data consisting
of the pairs $(\Delta_\delta Z_t, \Delta_\delta Z_{t+k}),$ $t=\delta, \delta
+1,\ldots, N-k,$ that are plotted for $k=1,2,\ldots, 100,$ for the values of
$\delta$ equal to 20, 40 and~60.

\begin{figure}[ht] 
\centering
   \includegraphics[width=8cm]{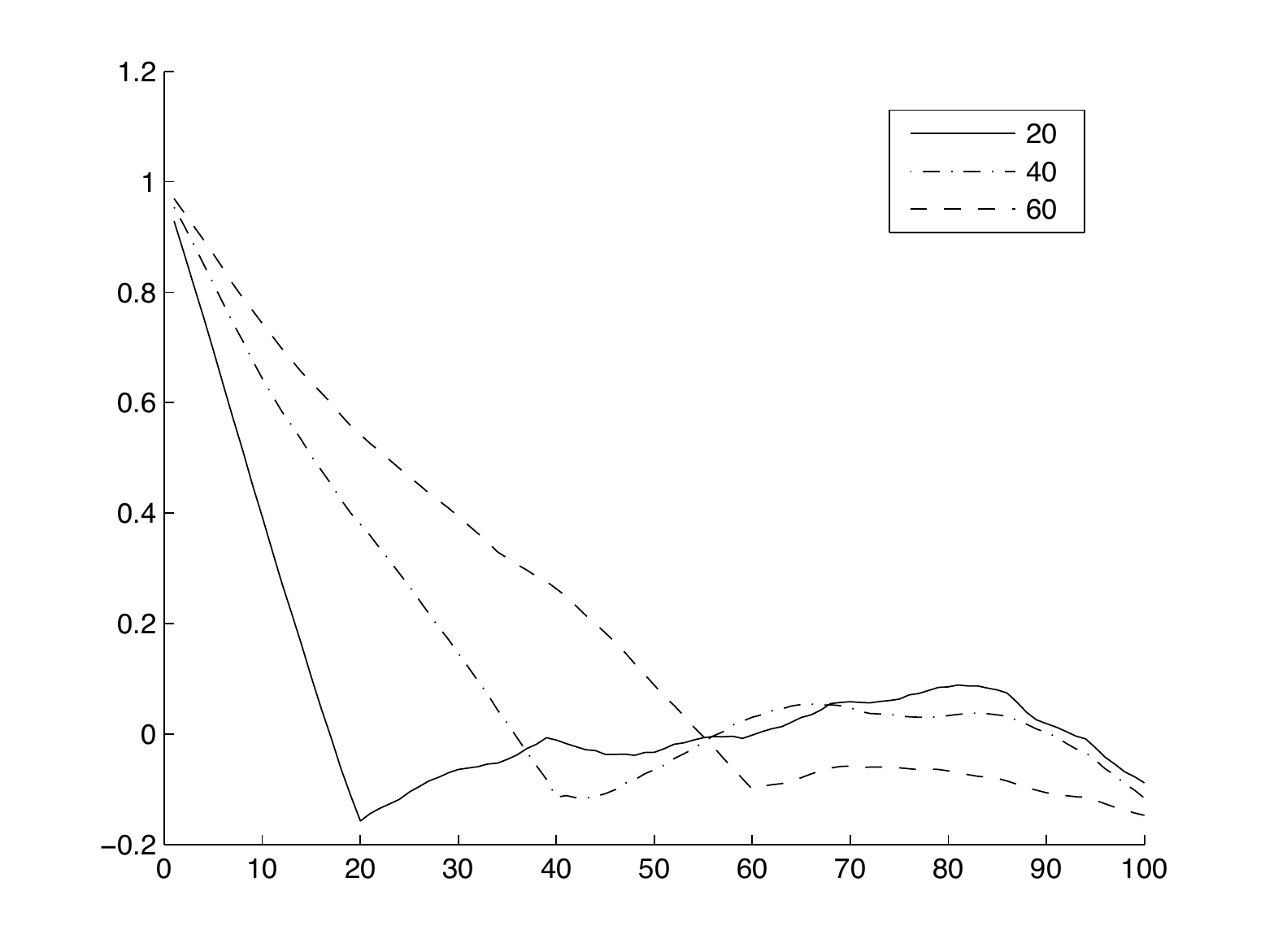}
 \caption{\small The plots of Spearman's rhos for the pairs of log returns $(\Delta_\delta Z_t, \Delta_\delta Z_{t+k}),$ $t=\delta, \delta
+1,\ldots, N-k,$  over time periods of $\delta=20, 40$ and~60 days, plotted for $k=1,2,\ldots, 100.$}
\label{fig_incr_corr}
\end{figure}

A remarkable common feature of these plots is that they decay in an almost
linear way for lag values $k$ from 1 to $\delta$, attaining (small) negative
values at the minimum points, and then start slowly growing.

\subsection{Copulas}

To further analyse the dependence structure of our data, we turn to copulas.
One standard way of graphical representation of empirical copulas is to make
scatterplots of the pseudo-sample obtained by transforming the components of
the sample points using the marginal empirical distribution functions (see
e.g.\ p.\,232 in~\cite{McNeil}).

We will begin with considering the original data set~$\{Z_t\}$. To avoid the
interference of the possible long-term trend, first we process the original
data from blocks of length~100. We consider $20$ subsamples
\[
\{(U_{k,j}, V_{k,j}):=(Z_{t_k+j}, Z_{t_k+j+5}) , \ j=0,1,\ldots, 94\}, \quad\mbox{\rm
where $t_k= 100k,$ $k=0,1,\ldots, 19$},
\]
and for each of them produce a sub-pseudosample given by
\[
(\hat{U}_{k,j},\hat{V}_{k,j}) = \left( \frac{\textrm{rank}(U_{k,j})}{96}
,\frac{\textrm{rank}(V_{k,j})}{96}   \right), \qquad j=0,\hdots,94,
\]
where $\textrm{rank}(U_{k,j})$ is the rank of $U_{k,j}$ in the $k$th subsample
$U_{k,0}, U_{k,1}, \ldots, U_{k,94}$, and likewise for $\textrm{rank}(V_{k,j})$. The
aggregate pseudosample is then obtained as the union of these 20 sub-pseudosamples and
is depicted on the left pane in Fig.~\ref{fig:copulas_1}.

\begin{figure}[ht]  %
\centering
\begin{tabular}{c}
   \includegraphics[width=7cm]{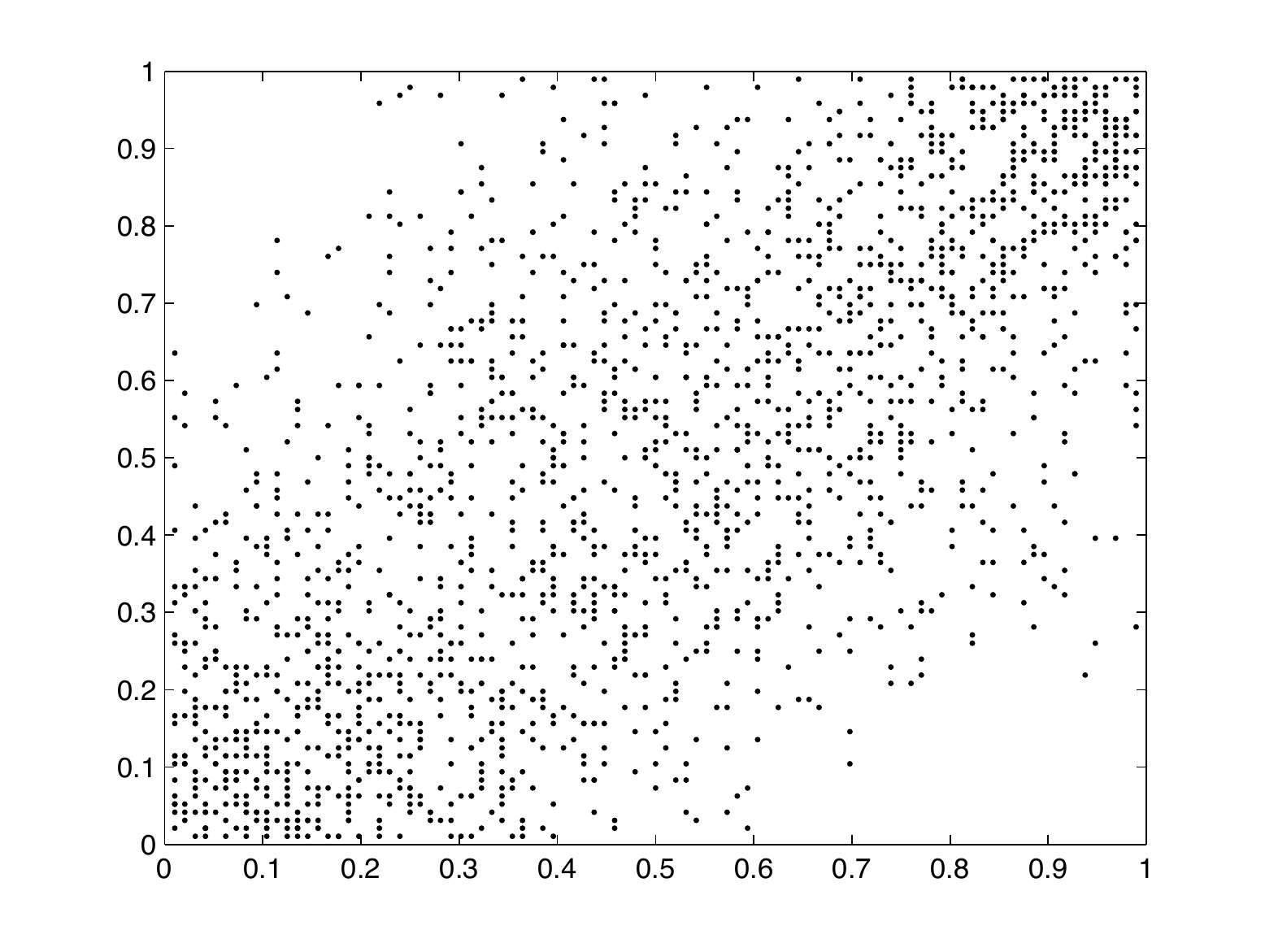}
   \includegraphics[width=7cm]{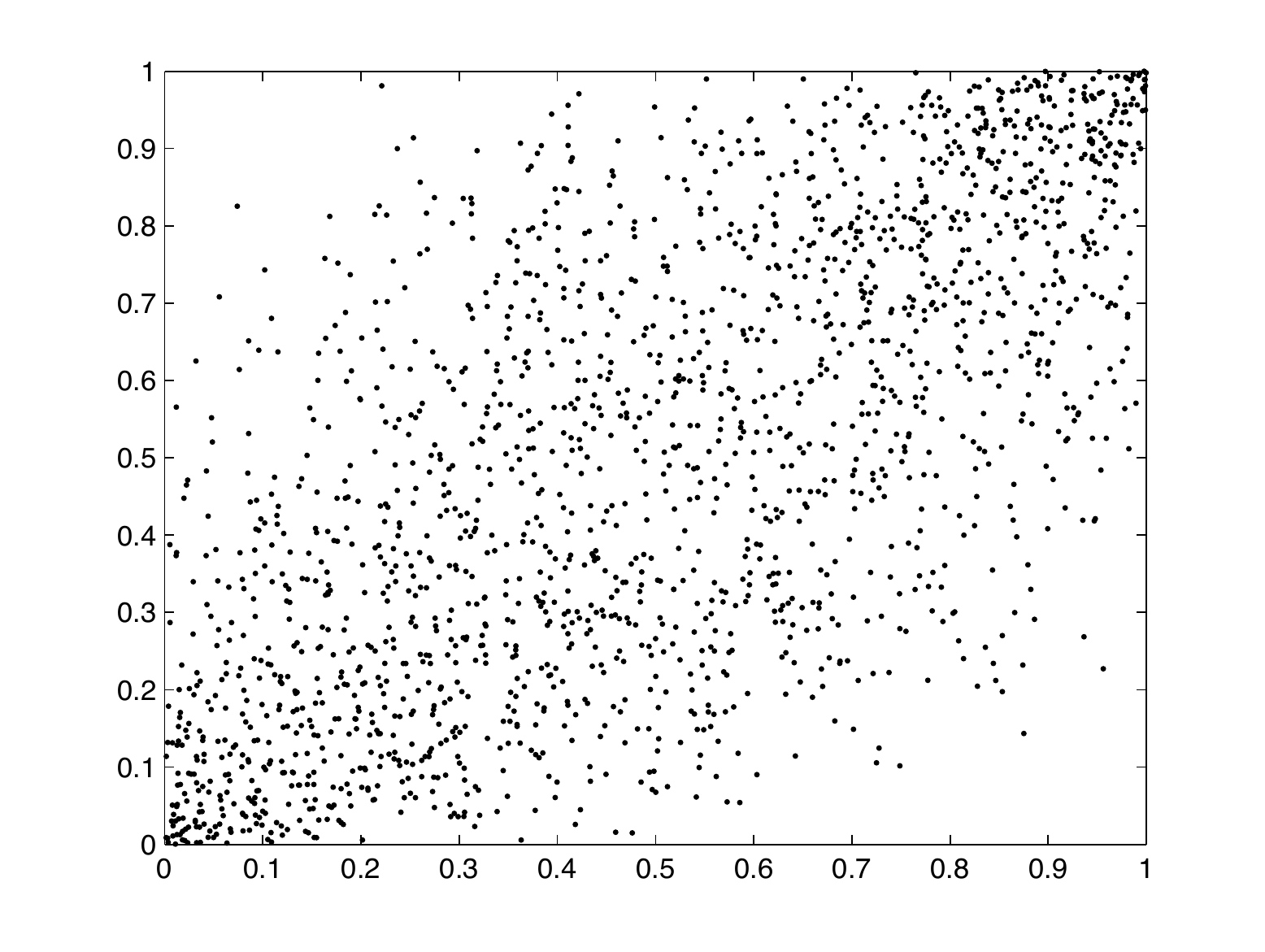}\\
\end{tabular}
 \caption{\small The aggregate scatterplot for the pseudosamples obtained from the
 data $(Z_t, Z_{t+5})$ (left pane) and the scatterplot of $N$ simulated points from
 the Gauss copula $C_\rho^{\rm  Ga}$ with $\rho =0.693$ obtained from the estimate of Spearman's
 rho.}
\label{fig:copulas_1}
\end{figure}

Next to that scatterplot representing the dependence structure for the log
prices at lag five, we put the scatterplot of $N$ points simulated from the
Gauss copula
\begin{equation}
 \label{Ga_co}
C_\rho^{\rm Ga}(u_1,u_2)=\int_{-\infty}^{\Phi^{-1}(u_1)}\int_{-\infty}^{\Phi^{-1}(u_2)}
 \exp\left\{-\frac{x_1^2-2\rho x_1x_2+x_2^2}{2(1-\rho^2)}\right\} \frac{ dx_1dx_2}{2\pi(1-\rho^2)^{1/2}},
\end{equation}
$\Phi$ being the standard normal distribution function and $\Phi^{-1}$ its inverse,
with the parameter $\rho$ value obtained from our estimate for Spearman's rho. Recall
that, for any bivariate random vector $(X_1, X_2)$ following a  meta-Gaussian
distribution with copula~\eqref{Ga_co}, one has
\begin{equation}
 \label{rhorho}
\rho_S (X_1, X_2) = \frac{6}{\pi} \arcsin \frac{\rho}2
\end{equation}
(see e.g.\ Theorem~5.36 in~\cite{McNeil}), so that we can use the estimate $\hat\rho=
2\sin (\pi\hat\rho_S/6)$ for~$\rho$. In the case of the data set $(Z_t, Z_{t+5})$, this
leads to $\hat\rho= 0.693.$

No localisation of data followed by the subsequent  aggregation is needed for the
increments of the original time series values. Figure~\ref{fig:copulas_2} presents the
scatterplot of the pseudosample constructed from the data set $\{(\Delta_{20} Z_t,
\Delta_{20} Z_{t+5}),\, t=20,21, \ldots, N-5\}$, alongside with the scatterplot of a
sample simulated from the Gauss copula with the parameter value $\rho= 0.695$ (also
estimated via~\eqref{rhorho} from the respective Spearman's rho).

\begin{figure}[h]
\centering
\begin{tabular}{c}
   \includegraphics[width=7cm]{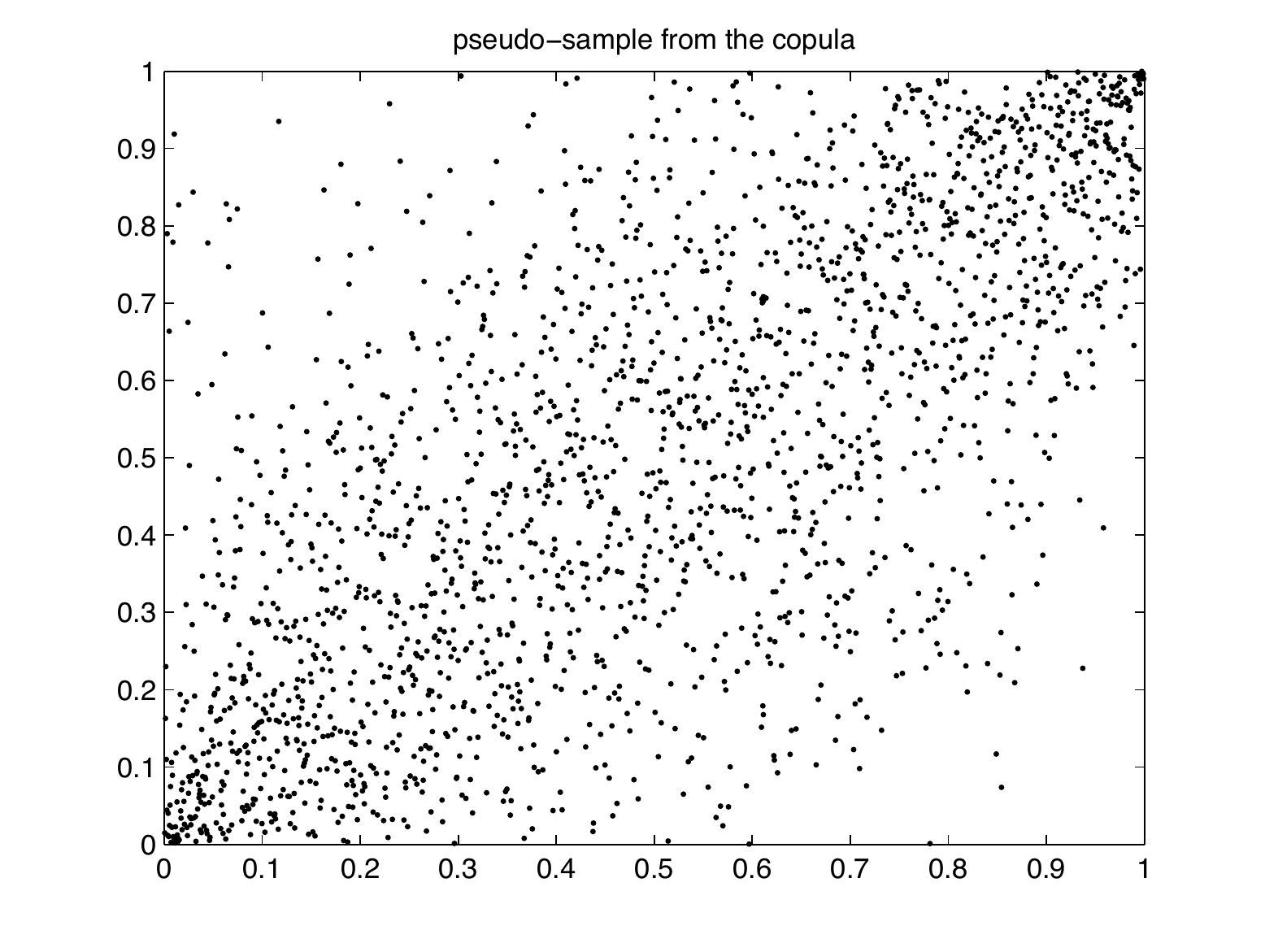}
   \includegraphics[width=7cm]{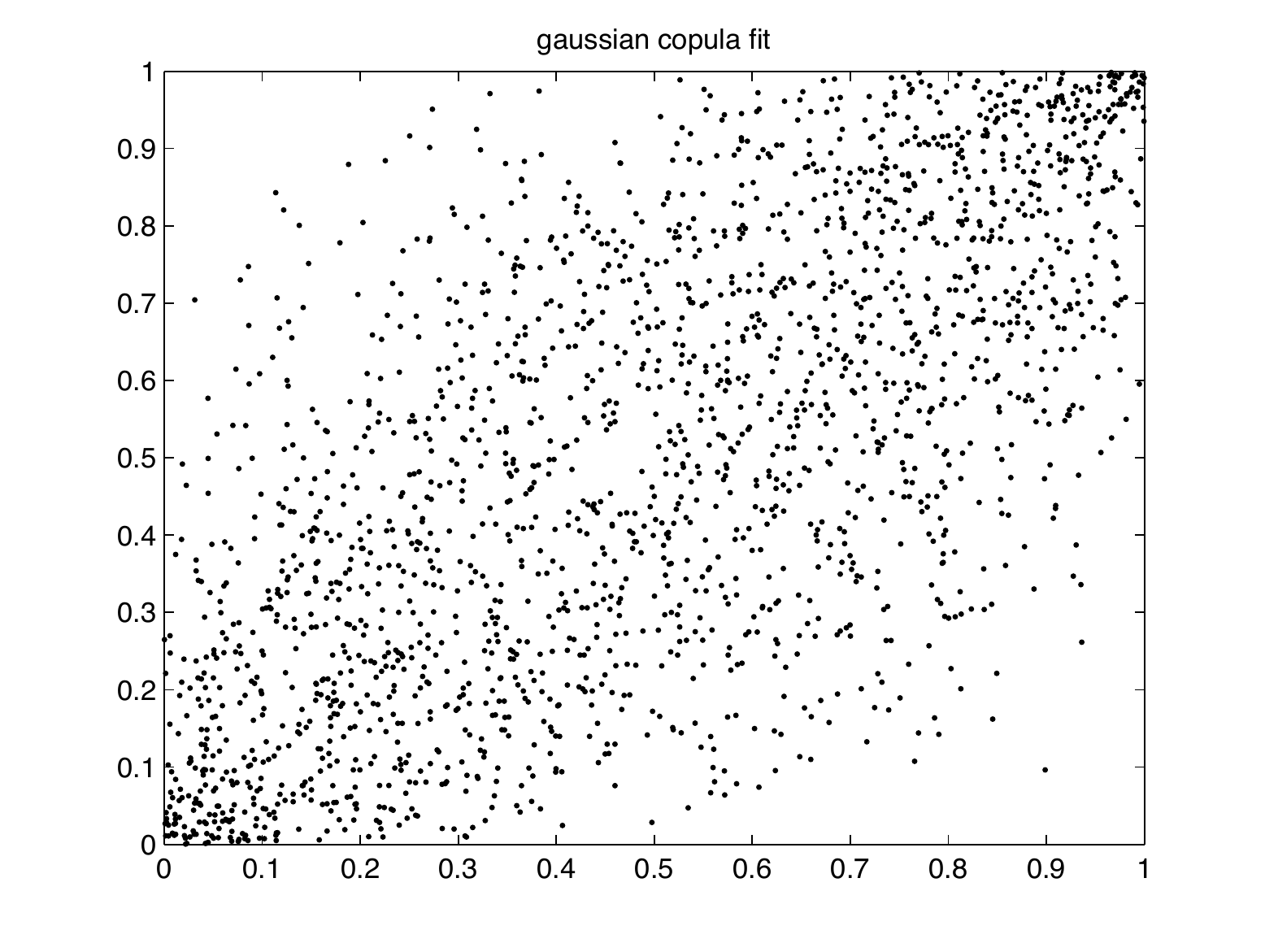}\\
\end{tabular}
 \caption{\small The scatterplot for the pseudosample obtained from the data $(\Delta_{20} Z_t, \Delta_{20} Z_{t+5})$ (left pane) and the scatterplot of $N$ simulated points from
 the Gauss copula $C_\rho^{\rm  Ga}$ with $\rho =0.695$ obtained from the estimate of Spearman's rho.
 }
\label{fig:copulas_2}
\end{figure}

We can see that, in both cases, there is a reasonable agreement of the empirical data
with the fitted Gauss copulas. To confirm this observation numerically, we used a
recently suggested method for calculating approximate $P$-values for testing the
goodness-of-fit by parametric copula families~\cite{KoYa1,KoYa2}. As the method is
valid for i.i.d.\ samples, to use it in our situation we first had to weaken the
dependence between sample points by ``rarefying" the data. We applied the test to
samples obtained by taking each 10th pair of log-returns over two time intervals of
length $\delta=1, 5, 10$ and $20$ days, one shifted by 5~days relative to the other,
thus dealing with four bivariate samples each consisting of 206 points of the form
$(\Delta_\delta Z_t, \Delta_\delta Z_{t+5})$, $t= 10,20,30,\ldots, 2060.$ The method
uses Monte Carlo techniques, and based on 100 simulations from the respective Gauss
copulas for each of the $\delta$ values considered, we obtained the following
$P$-values: 0.675 ($\delta=1$); 0.332 ($\delta=5$); 0.214 ($\delta=10$); 0.433
($\delta=20$);  so that the Gauss copula hypothesis was not to be rejected.

\medskip

In conclusion of this section, we   summarise its key findings that will be
referred to in Section~\ref{Sec3}:
\begin{enumerate}
\item[{\bf [F$_1$]}] The log returns $\Delta_\delta Z_t$ over periods of
    $\delta$ days have distributions of which the ``central parts" (about
    95\% of all data) are very well fitted by the respective parts of
    normal distributions.

\item[{\bf [F$_2$]}] The empirical distributions of the log returns
    $\Delta_\delta Z_t$ have power tails, with the exponent of the power
    function (roughly) independent of the lag~$\delta$.

\item[{\bf [F$_3$]}] Spearman's rho for $(Z_t, Z_{t+k})$ decays as an
    exponential function of~$k$.

\item[{\bf [F$_4$]}] As a function of $k$, Spearman's rho for
    $(\Delta_\delta Z_t, \Delta_\delta Z_{t+k})$ behaves in the fashion
    presented in Fig.~\ref{fig_incr_corr}.

\item[{\bf [F$_5$]}] For fixed $k$ and $\delta$, the empirical copulas for
    both $(Z_t, Z_{t+k})$ and $(\Delta_\delta Z_t, \Delta_\delta Z_{t+k})$
    agree with the respective Gauss copulas $C_\rho^{\rm Ga}$, with
    the parameter $\rho$ estimated from the calculated values of Spearman's
    rho for the samples. We illustrated that for $k=5$, $\delta=20$, and
    gave approximate $P$-values for a goodness-of-fit test for Gauss
    copulas for $(\Delta_\delta Z_t, \Delta_\delta Z_{t+k})$ with $k=5$ and
    $\delta=1,5, 10$ and 20, but the situation is similar for other
    values of the quantities as well.

\end{enumerate}

\section{The model and its key properties}
\label{Sec3}

In this section, we present a formal description of our simple model suggested by the
findings {\bf [F$_1$]}--{\bf [F$_5$]} and demonstrate that it does have properties
consistent with these empirical facts.

Let $\{X_t\}$ be a stationary  OU process driven by the stochastic differential
equation~\eqref{ou1}. It is well known that $\{X_t\}$ is a Gaussian process,
with $X_t\sim N(0,\sigma^2)$, where $\sigma^2=\tau^2/(2\alpha)$. Furthermore,
for $0\le s\le  t$, the conditional distribution of $X_t$ given $X_s=x$ is
normal with mean and variance given by $ m_{t-s} (x)$ and $\sigma^2_{t-s}$,
respectively, where we used notation
\[
 m_{u} (z):= z e^{-\alpha u}
, \qquad
 \sigma^2_{u}:=\frac{\tau^2}{2\alpha}\, (1-e^{ -2\alpha u }),
\]
so that the (linear) correlation function of the process has the form
\begin{equation}
 \label{ou_corr}
 \rho (X_s, X_t)=e^{-\alpha |t-s|}.
\end{equation}

For a    strictly increasing continuous mapping $h:\R\to\R$, consider the process
$$
Y_t := h(X_t), \qquad t\ge 0.
$$
It is obvious that $\{Y_t\}$ is also a stationary Markov process and, moreover, that if
$h$ is twice continuously differentiable then $\{Y_t\}$  will be a diffusion process as
well. Observe that all finite-dimensional distributions of $\{Y_t\}$ are meta-Gaussian:
for any $0\le t_1 < t_2 <\cdots < t_n$, the vector $(Y_{t_1}, \ldots, Y_{t_n})$ has a
Gauss copula with the correlation matrix $\left(\rho_{ij}=e^{-\alpha |t_i -
t_j|}\right)_{i,j\le n}$ and identical univariate marginal d.f.'s all equal to
$F(y):=\Pb (Y_t \le y)= \Phi (h^{-1}(y)/\sigma)$, $\Phi$ being the standard normal~d.f.

Our first objective will be to determine conditions on $h$ under which one or both of
the distribution tails of $Y_t$ is/are of regular variation, so  that for the d.f.\ $F$
of $Y_t$ one has
\begin{align}
 \label{rt}
\overline{F} (y)  := 1 &-F(y)  = y^{-\beta_+} L_+(y)
\\
&\mbox{and/or} \notag
 \\
 F(-y)& = y^{-\beta_-} L_-(y),
 \label{lt}
\end{align}
where $\beta_\pm>0$ and $L_\pm$ are functions slowly varying  as $y\to\infty.$

To this end, we will introduce two conditions {\bf [A$_+$]} and {\bf [A$_-$]}
as follows:
\smallskip

{\bf [A$_\pm$]}~{\em  For some constant $b_\pm >0$ and differentiable function
$f_\pm (x)$, one has }
\begin{equation}
 \label{fn_h}
 h(x)= \pm \exp\{ b_\pm  x^2 +  f_\pm (x) +o(1) \}  \quad \mbox{\em with \ $ f'_\pm (x)=o(x)  $ \ as \
 $x\to \pm \infty$}.
\end{equation}

Now we can state the following key result on the tail behaviour.

\begin{thm}
 \label{prop_1} The following assertions hold true.

\smallskip

{\rm (i)}~Under condition {\bf [A$_+$]}, the stationary distribution $F$ of
$Y_t$ has a right tail of the form \eqref{rt} with
$\beta_+=1/(2b_+\sigma^2)=\alpha/(b_+\tau^2)$.

\smallskip

{\rm (ii)}~Similarly, under condition {\bf [A$_-$]} the left tail of $F$ is of
the form \eqref{lt} with $\beta_-=1/(2b_-\sigma^2)=\alpha/(b_-\tau^2)$.
\smallskip

{\rm (iii)}~Under condition {\bf [A$_+$]}, the right tail of the transition
distribution function $F_{s,t} (y_s, y_t):=\Pb (Y_t \le y_t \, |\, Y_s = y_s),
$ $0\le s < t,$ is regularly varying of index
\[
- \beta_{+,t-s} := -\frac{1}{2b_+\sigma_{t-s}^2}= - \frac{\alpha}{b_+\tau^2 ( 1 - e^{-2\alpha (t-s)})} <
-\beta_+.
\]
Under condition {\bf [A$_-$]}, a symmetric assertion holds for the left tail of
the transition distribution, with the regular variation index $- \beta_{-,t-s}
:= -(2b_-\sigma_{t-s}^2)^{-1}$.

{\rm (iv)}~If\/~{\bf [A$_+$]} is met and $h(-x) = o(h(x))$ as $x\to\infty$,
then, for any $0\le s < t,$ the right tail of the distribution of the increment
\/ $Y_t- Y_s$ in the stationary process has the same regularly varying
asymptotic behaviour at infinity as that of $Y_t$ from part~{\rm (i)}.
Similarly, if\/~{\bf [A$_-$]} holds  and $h(x) = o(h(-x))$ as $x\to\infty$,
then the right tail of the increment \/ $Y_t- Y_s$ in the stationary process
has the same regularly varying asymptotic behaviour at infinity as the\/ {\em
left} tail of $F$ from part~{\rm (ii)}.
\end{thm}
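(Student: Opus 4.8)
\medskip\noindent\textbf{Proof proposal for part~(iv).} Fix $0\le s<t$ and write $r:=e^{-\alpha(t-s)}\in(0,1)$; recall that $(X_s,X_t)$ is centred jointly Gaussian with common variance $\sigma^2$ and correlation $r$, and that, by part~(i), $\overline{F}$ is regularly varying of index $-\beta_+$. The plan is to establish the first assertion (the second should follow by a completely symmetric argument, with {\bf [A$_-$]}, part~(ii) and the hypothesis $h(x)=o(h(-x))$ replacing the respective ingredients) by sandwiching $\Pb(Y_t-Y_s>y)$ between asymptotic multiples of $\overline{F}(y)$. For any $\ep\in(0,1)$ I would start from the elementary inclusions
\[
\{Y_t>(1+\ep)y\}\cap\{Y_s\le\ep y\}\ \subseteq\ \{Y_t-Y_s>y\}\ \subseteq\ \{Y_t>(1-\ep)y\}\cup\{Y_s<-\ep y\};
\]
combining these with the regular-variation relation $\Pb(Y_t>cy)\sim c^{-\beta_+}\overline{F}(y)$ and the identity $\Pb(Y_t>(1+\ep)y,\,Y_s\le\ep y)=\Pb(Y_t>(1+\ep)y)-\Pb(Y_t>(1+\ep)y,\,Y_s>\ep y)$ reduces the statement to the two asymptotic negligibility claims
\begin{equation*}
\text{(a)}\quad\Pb(Y_s<-\ep y)=o(\overline{F}(y)),\qquad\qquad
\text{(b)}\quad\Pb\big(Y_t>(1+\ep)y,\ Y_s>\ep y\big)=o(\overline{F}(y)),
\end{equation*}
after which letting $\ep\downarrow0$ in the resulting bounds $(1+\ep)^{-\beta_+}\le\liminf\le\limsup\le(1-\ep)^{-\beta_+}$ for $\Pb(Y_t-Y_s>y)/\overline{F}(y)$ finishes the proof.

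To prove~(a) I would first dispose of the trivial case in which $h$ is bounded below, then assume $h(x)\to-\infty$ as $x\to-\infty$ and put $\mu=\mu(y):=h^{-1}(y)$ and $w=w(y):=-h^{-1}(-\ep y)>0$, so that $\Pb(Y_s<-\ep y)=\Pb(X_s<-w)$ while $\overline{F}(y)=\overline{\Phi}(\mu/\sigma)$; by the standard two-sided Gaussian (Mills ratio) estimates, (a) then amounts to showing $w^2-\mu^2\to\infty$. This is the step that uses the hypothesis $h(-x)=o(h(x))$ together with $f'_+(x)=o(x)$ from {\bf [A$_+$]}: since $h(-w)=-\ep y$, the hypothesis gives $\ep y=-h(-w)=o(h(w))$, hence $\ln h(w)-\ln(\ep y)\to\infty$; writing $\ln h(w)=b_+w^2+f_+(w)+o(1)$ and $\ln h(\mu)=b_+\mu^2+f_+(\mu)+o(1)=\ln y$, subtracting, and bounding $|f_+(w)-f_+(\mu)|\le\int_\mu^w|f'_+(s)|\,ds=o(w^2-\mu^2)$ (which incidentally gives $w>\mu$ for large $y$) leaves $b_+(w^2-\mu^2)(1+o(1))\to\infty$, as required.

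For~(b) I would set $a=a(y):=h^{-1}((1+\ep)y)$ and $b=b(y):=h^{-1}(\ep y)$, so the probability in question equals $\Pb(X_t>a,\,X_s>b)$, and note that {\bf [A$_+$]} yields $a+b=(2+o(1))\mu(y)$ (the same computation as for $\mu$ above). Then I would use only the crude inclusion $\{X_t>a\}\cap\{X_s>b\}\subseteq\{X_t+X_s>a+b\}$ together with $X_s+X_t\sim N(0,\,2\sigma^2(1+r))$ to get $\Pb(X_t>a,\,X_s>b)\le\exp\{-(a+b)^2/(4\sigma^2(1+r))\}$; because $r<1$, the exponent here grows like $\mu(y)^2/(\sigma^2(1+r))$, strictly faster than $-\ln\overline{F}(y)\sim\mu(y)^2/(2\sigma^2)$ (as $1/(\sigma^2(1+r))>1/(2\sigma^2)$), so the bound is $o(\overline{F}(y))$. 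Together with~(a) and the inclusions above, this gives $\Pb(Y_t-Y_s>y)\sim\overline{F}(y)$.

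I expect the one genuinely delicate point to be the divergence $w^2-\mu^2\to\infty$ in~(a): the hypothesis $h(-x)=o(h(x))$ sits exactly on the borderline at which $\Pb(Y_s<-\ep y)$ and $\overline{F}(y)$ have the same logarithmic order, so the estimate has to be lossless, and it is the little-$o$ (rather than merely $O$) in $h(-x)=o(h(x))$, used in tandem with $f'_+(x)=o(x)$ to control the non-quadratic part of $\ln h$ on the interval $[\mu(y),w(y)]$, that forces the gap between the two squared Gaussian thresholds to blow up. Step~(b) (an exponential bound exploiting $\mathrm{Var}(X_s+X_t)=2\sigma^2(1+r)<4\sigma^2$) and the symmetric treatment of the second assertion should be routine.
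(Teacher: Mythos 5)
Your proposal addresses only part~(iv), taking part~(i) as given (which is also how the paper uses it there), and leaving (i)--(iii) untouched; for part~(iv) it is correct but follows a genuinely different route. The paper writes $\Pb(Y_t-Y_s>y)=\int\Pb(Y_s<z-y,\,Y_t\in dz)$, splits at $(1\pm\ep)y$ with $\ep=\ep(y)\downarrow 0$ decaying slowly, shows $I_1,I_2=o(\overline{F}(y))$, and obtains the main term by proving that $\Pb(Y_s<z-y\mid Y_t=z)\to 1$ uniformly in $z\ge(1+\ep)y$, using the explicit conditional Gaussian law $N(u e^{-\alpha(t-s)},\sigma^2_{t-s})$ of $X_s$ given $X_t=u$ together with {\bf [A$_+$]}. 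You instead keep $\ep$ fixed, sandwich $\{Y_t-Y_s>y\}$ between two events, reduce to the negligibility claims (a) and (b), and let $\ep\downarrow 0$ only at the end via regular variation. Claim (a) is close in spirit to the paper's $I_1$ bound (the paper feeds the threshold $\ep y/\eta^*(v(\ep y))$ back into part~(i), you run a Mills-ratio comparison in which $h(-x)=o(h(x))$ and $f_+'(x)=o(x)$ force $w^2-\mu^2\to\infty$), while (b) replaces the conditioning step by the inclusion $\{X_t>a,X_s>b\}\subset\{X_s+X_t>a+b\}$ and a Chernoff bound using $\mathrm{Var}(X_s+X_t)=2\sigma^2(1+r)<4\sigma^2$; both proofs ultimately exploit $r=e^{-\alpha(t-s)}<1$, the paper via $\sigma^2_{t-s}>0$, you via the variance of the sum. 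Your version is more elementary (no conditional law, no uniform convergence of a conditional probability), at the cost of the by-product the paper draws from its argument, namely the explanation of the ``pre-limiting'' shift of the tail plots for different $\delta$. Two minor points: in (a) you integrate $|f_+'|$ over $[\mu,w]$ before knowing $w\ge\mu$; integrate over $[\min(w,\mu),\max(w,\mu)]$, bound by $o(|w^2-\mu^2|)$, and rule out $w<\mu$ by contradiction, as your parenthetical hints. For the symmetric second assertion, the reduction via $Y_t-Y_s=-(Y_s-Y_t)$, $(Y_s,Y_t)\stackrel{d}{=}(Y_t,Y_s)$ and $\tilde h(x):=-h(-x)$ (which satisfies {\bf [A$_+$]} with $b_+$ replaced by $b_-$) is indeed routine, as both you and the paper assert.
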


The proof of the theorem is given in Section~\ref{Sec_proofs}.

Now suppose that our $h$ is linear in the ``middle part": say, on an interval
$I$ such that $\Pb (X_t \in I)\ge 1-\ep/2$ (with a small enough $\ep$), we have
\[
h(x) = a + cx, \qquad x\in I,
\]
for some constants $a\in\R$, $c>0$, and then outside $I$ the function $h$ has
``tail parts" satisfying conditions~{\bf[A$_\pm$]}. Then, assuming that the log
price time series $\{Z_t\}$ is represented by (the values at integer time
points $t$ of) the
process $\{Y_t = h(X_t)\}$, 
we will have for the log returns a representation of the form
\begin{equation}
 \label{h-h}
\Delta_\delta Z_t = h (X_t) - h (X_{t-\delta}) = c(X_t - X_{t-\delta})
 + \xi_{t,\delta},\qquad \Pb (\xi_{t,\delta}\neq 0)\le \ep,
\end{equation}
since $\{\xi_{t,\delta}\neq 0\}\subset\{X_t\not\in I\}\cup\{X_{t-\delta}\not\in
I\}.$ Therefore the ``central part" of the distribution of $\Delta_\delta Z_t$
will be very close to that of $ c(X_t - X_{t-\delta})$, which is clearly
normal. This shows that our model is consistent with empirical fact~{\bf
[F$_1$]}.

That {\bf [F$_2$]} is also reproduced by the model follows from
Theorem~\ref{prop_1}(iv). Here a curious observation is in order. On the one
hand, the log-log plots in Fig.~\ref{fig:lr} display ``shifted" parallel
straight line segments in the curves corresponding to different values of
$\delta$, and such a translation indicates that the tail behaviours of the
returns' distributions for different $\delta$'s  differ by constant factors
(increasing with $\delta$). On the other hand, Theorem~\ref{prop_1}(iv) claims
that they should have common asymptotics, which seems to be a contradiction.
However, large sample simulations of the differences of transformed components
of Gaussian vectors demonstrate the same translation of the straight line
segments present in the log-log plots of the empirical distribution tails for
increments, corresponding to different $\delta$ values, in the ``moderately
large" deviations zone (in agreement with the empirical observations), which is
then followed by a ``fusion" of the curves for larger deviation values thus
confirming the common asymptotics established in Theorem~\ref{prop_1}(iv). The
proof of the theorem suggests the following explanation for this phenomenon:
the established (common) asymptotics are essentially due to the convergence of
the conditional probability in the integral on the right-hand side
of~\eqref{I_32} (or, rather, the conditional probability $\Pb (Y_s < z-y \mid
Y_t =z)$ in~$I_3$, see~\eqref{III}) to one. However, in vicinity of the lower
integration limit $(1+\ep)y$ in $I_3$ this convergence is slow. In combination
with the regular variation of the right tail of the distribution of $Y_t$, this
results in a ``pre-limiting" tail beahaviour differing from that of $\Pb (Y_t
>y)$ by a factor depending on $\delta=t-s$ and $y$, which can be small for
small $\delta$ (hence the translation of the plots), but which eventually tends
to one as $y\to\infty$.

To illustrate the above statement concerning the simulation study,
Fig.~\ref{taila} displays the log-log plots  of the empirical distribution
tails for three i.i.d.\ samples (of size $3\times 10^5$ each) of the
differences $h(\zeta_2) - h(\zeta_1)$, where
\begin{equation}
 \label{h-h-H}
h(x) = 2(e^{  (3+{\rm sign}\, x)x^2/20} -1)\, \mbox{sign}\, x +  x, \qquad x\in\R,
\end{equation}
has the form stipulated in~\eqref{fn_h}, with $b_+=0.2$ and $b_-=0.1$, and
$(\zeta_1, \zeta_2)$ have standard normal bivariate distributions with
correlations $e^{-a}$, with $a=0.5$ (the left curve), 1 and 1.5 (the right
curve), respectively,  playing the role of $\alpha\delta$.

\begin{figure}[h]  
\centering
   \includegraphics[width=8cm]{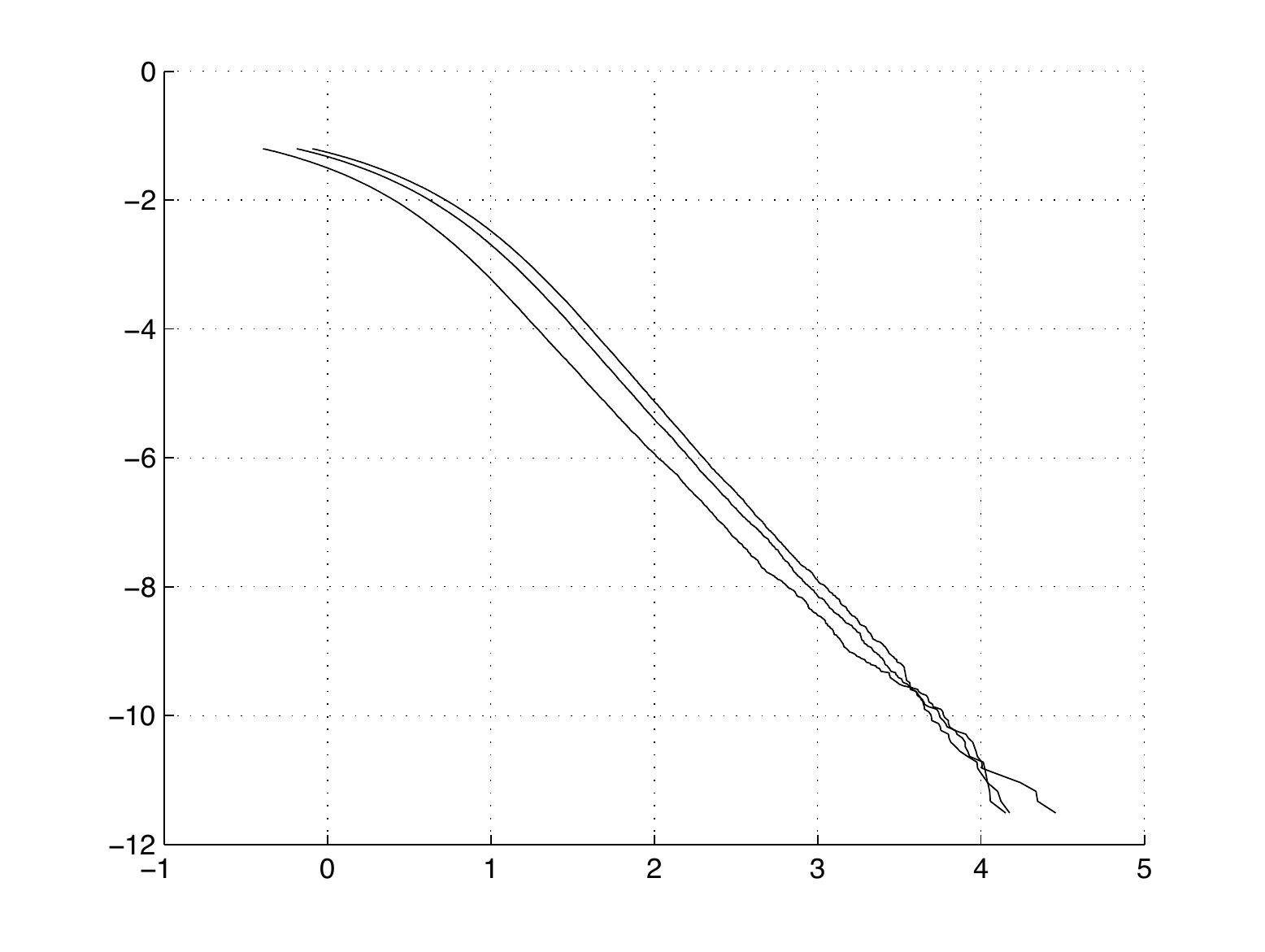}
 \caption{\small The log-log plots of the empirical distribution tails for simulated differences $h(\zeta_2) - h(\zeta_1)$ for i.i.d.\ standard Gaussian
 random vectors $(\zeta_1, \zeta_2)$ with different correlations
 (for detailed description, see the text around formula~\eqref{h-h-H}).}
\label{taila}
\end{figure}

That property {\bf [F$_3$]} holds for $\{Z_t :=h(X_t)\}$ follows from the
exponential form of the correlation function~\eqref{ou_corr}, the invariance of
Spearman's rho under strictly increasing transformations (so that $\rho_S
(h(X_s), h(X_t)) = \rho_S (X_s, X_t)$) and relation~\eqref{rhorho} (note that
the function of $\rho$ on the RHS of~\eqref{rhorho} is very close to the
identity function on the interval $\rho\in [-1,1]$).

The demonstration of {\bf [F$_4$]} is somewhat less straightforward. First we
state the following bound of which the proof is given in
Section~\ref{Sec_proofs}. As can be seen from the proof, the bound is rather
conservative, and so the result is more qualitative than quantitative in
nature: one can expect that, in most cases, the distance between the values of
the Spearman's rhos will be much smaller than the bound given.

\begin{thm}
 \label{prop_2}
Let $U$, $V$, $\xi$ and $\eta$ be random variables given on a common
probability space and such that $U$, $V$ and the sums
\[
U_1 := U+ \xi, \qquad  V_1 := V+ \eta
\]
are all continuously distributed. Then, setting   $\Pb (\xi\neq 0)=:\ep_1  $
and $\Pb (\eta\neq 0)=:\ep_2, $ one has
\[
|\rho_S (U_1, V_1) - \rho_S (U , V )| \le 18 (\ep_1 + \ep_2) + 12 \ep_1\ep_2.
\]
\end{thm}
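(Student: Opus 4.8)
The plan is to work directly from the definition \eqref{rhoS} of Spearman's rho, writing $\rho_S(U_1,V_1)=\rho(F_{U_1}(U_1),F_{V_1}(V_1))$ and $\rho_S(U,V)=\rho(F_U(U),F_V(V))$, and then estimating the difference of these two linear correlations. The key elementary observation is that on the event $\{\xi=0\}$ we have $U_1=U$, and similarly $V_1=V$ on $\{\eta=0\}$; since $\Pb(\xi\neq 0)=\ep_1$ and $\Pb(\eta\neq0)=\ep_2$, the pairs $(U_1,V_1)$ and $(U,V)$ coincide off an event of probability at most $\ep_1+\ep_2$. However, one cannot compare $F_{U_1}(U_1)$ with $F_U(U)$ quite so cheaply, because the distribution functions $F_{U_1}$ and $F_U$ differ. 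The standard fact I would invoke is that $F_{U_1}(U_1)$ and $F_U(U)$ are both uniform on $[0,1]$ (continuity of the marginals), and that $\|F_{U_1}-F_U\|_\infty\le\Pb(\xi\neq0)=\ep_1$ by coupling: for any $x$, $|F_{U_1}(x)-F_U(x)|=|\Pb(U_1\le x)-\Pb(U\le x)|\le\Pb(U_1\le x,\ U>x)+\Pb(U\le x,\ U_1>x)\le\Pb(\xi\neq0)$. From this I get $\E|F_{U_1}(U_1)-F_U(U)|\le\Pb(U_1\neq U)+\E|F_{U_1}(U)-F_U(U)|\le 2\ep_1$, and similarly for the $V$-side with bound $2\ep_2$.

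Next I would set $A:=F_U(U)$, $B:=F_V(V)$, $A_1:=F_{U_1}(U_1)$, $B_1:=F_{V_1}(V_1)$, all uniform on $[0,1]$, so $\E A=\E A_1=\E B=\E B_1=\tfrac12$ and $\mathrm{Var}=\tfrac1{12}$ for each. Then $\rho_S(U,V)=12\,\mathrm{Cov}(A,B)=12(\E AB-\tfrac14)$ and $\rho_S(U_1,V_1)=12(\E A_1B_1-\tfrac14)$, so $|\rho_S(U_1,V_1)-\rho_S(U,V)|=12\,|\E(A_1B_1-AB)|$. Writing $A_1B_1-AB=(A_1-A)B_1+A(B_1-B)$ and using $0\le A,B_1\le1$ gives $|\E(A_1B_1-AB)|\le\E|A_1-A|+\E|B_1-B|\le 2\ep_1+2\ep_2$, hence a bound $24(\ep_1+\ep_2)$ — close but not yet the stated $18(\ep_1+\ep_2)+12\ep_1\ep_2$. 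To sharpen it, I would be more careful: on the complement of $\{\xi\neq0\}$, $A_1-A=F_{U_1}(U)-F_U(U)$, whose absolute value is $\le\ep_1$ pointwise; on $\{\xi\neq0\}$ we only have $|A_1-A|\le1$. Splitting the expectation accordingly gives $\E|A_1-A|\le\ep_1\cdot1+(1-\ep_1)\ep_1\le\ep_1(2-\ep_1)\le\tfrac32\ep_1$ once we also exploit that $|A_1-A|\le 1-\ep_1$ or similar on part of the event — more realistically, a careful bookkeeping of which of the four "bad" combinations of $\{\xi\neq0\},\{\eta\neq0\}$ occur, keeping the genuinely quadratic term $\ep_1\ep_2$ separate, should produce exactly $18(\ep_1+\ep_2)+12\ep_1\ep_2$ after multiplying by the factor $12$.

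The main obstacle, then, is not any deep idea but the combinatorial care needed in the final estimate: one must track the six or so events ($\xi=0$ vs.\ $\xi\neq0$, $\eta=0$ vs.\ $\eta\neq0$, and within each the comparison of the perturbed and unperturbed distribution functions) and assemble the pieces so that the linear coefficient comes out to be $18$ rather than the naive $24$, and so that the only term quadratic in the $\ep_i$ is $+12\ep_1\ep_2$. I would organise this by writing $A_1B_1-AB$ as a telescoping sum and bounding $\E|A_1-A|\le\tfrac32\ep_1$, $\E|B_1-B|\le\tfrac32\ep_2$ on the relevant events while treating the simultaneous-perturbation event $\{\xi\neq0,\eta\neq0\}$, of probability $\le\ep_1\ep_2$ (if $\xi,\eta$ are independent) or handled via inclusion–exclusion in general, as the source of the $12\ep_1\ep_2$ correction. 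Since the paper itself remarks that "the bound is rather conservative," I would not worry about optimality beyond matching the stated constants, and I expect the write-up to be a page of elementary but slightly fiddly inequality-chasing.
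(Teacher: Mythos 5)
Your setup coincides with the paper's: both write $\rho_S(U_1,V_1)=12\bigl(\E F_{U_1}(U_1)F_{V_1}(V_1)-\tfrac14\bigr)$ and both establish the sup-norm bounds $\sup_x|F_{U_1}(x)-F_U(x)|\le\ep_1$, $\sup_x|F_{V_1}(x)-F_V(x)|\le\ep_2$ by the same coupling argument. The gap is in the final estimate, which is exactly where the stated constants come from. Your route via $\E|A_1-A|$ with the crude bounds $0\le A,B_1\le1$ gives $24(\ep_1+\ep_2)$, and your proposed sharpening is not valid: the inequality $\ep_1(2-\ep_1)\le\tfrac32\ep_1$ fails for all $\ep_1<\tfrac12$, and indeed $\E|A_1-A|$ can be made arbitrarily close to $2\ep_1$ (take $\xi$ on $\{\xi\neq0\}$ so large that $U_1$ lands near the top of its range while $U$ is near the bottom), so no amount of bookkeeping of the events $\{\xi\neq0\},\{\eta\neq0\}$ alone can push $\E|A_1-A|$ below roughly $2\ep_1$. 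Also, your plan to source the quadratic term from $\Pb(\xi\neq0,\eta\neq0)\le\ep_1\ep_2$ needs an independence assumption the theorem does not make; in the paper the term $\ep_1\ep_2$ arises deterministically as the product of the two sup-norm errors, not from the probability of simultaneous perturbation.

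The missing idea is to exploit $\E F_U(U)=\E F_V(V)=\tfrac12$ so that the sup-norm errors enter multiplied by $\tfrac12$ rather than by $1$. The paper does this by bounding the product directly: on $\{\xi=\eta=0\}$ one has $F_{U_1}(U_1)F_{V_1}(V_1)=F_{U_1}(U)F_{V_1}(V)\le(F_U(U)+\ep_1)(F_V(V)+\ep_2)$, while off that event the product is $\le\mathbf{1}(\xi\neq0)+\mathbf{1}(\eta\neq0)$; taking expectations and using the means $\tfrac12$ gives $\E F_{U_1}(U_1)F_{V_1}(V_1)\le\E F_U(U)F_V(V)+\tfrac32(\ep_1+\ep_2)+\ep_1\ep_2$, and the symmetric lower bound plus the factor $12$ yields $18(\ep_1+\ep_2)+12\ep_1\ep_2$. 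Your telescoping $A_1B_1-AB=(A_1-A)B_1+A(B_1-B)$ can in fact be repaired the same way: estimate $|\E[(A_1-A)B_1\mathbf{1}(\xi=0)]|\le\ep_1\E B_1=\tfrac{\ep_1}2$ (keeping $B_1$ inside the expectation instead of bounding it by $1$) and $|\E[(A_1-A)B_1\mathbf{1}(\xi\neq0)]|\le\ep_1$, and similarly for the second term with $\E A=\tfrac12$; this gives $\tfrac32(\ep_1+\ep_2)$ and hence $18(\ep_1+\ep_2)$, even slightly better than the stated bound. But as written, with the key averaging step absent and replaced by an incorrect inequality, the proposal does not reach the claimed constants.
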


From that bound and \eqref{h-h} it follows that
\[
\bigl|
 \rho_S (\Delta_\delta Z_t, \Delta_\delta Z_{t+k}) - \rho_S (\Delta_\delta X_t  , \Delta_\delta X_{t+k}   )
 \bigr|
 \le \ep_0,
\]
where $\ep_0 = 36\ep +12 \ep^2.$ Here $(\Delta_\delta X_t  , \Delta_\delta
X_{t+k}   )\equiv (X_t - X_{t-\delta}, X_{t+k} - X_{t+k-\delta} )$ has a joint
Gaussian distribution, and so in view of the above-stated
relationship~\eqref{rhorho} between $\rho_S$ and $\rho$ in the Gaussian case,
it remains to evaluate the linear correlation
\[
\rho   (\Delta_\delta X_t  , \Delta_\delta X_{t+k}   )
 = \frac{\E (X_t - X_{t-\delta})(X_{t+k} - X_{t+k-\delta} )}{\mbox{\rm Var} (X_t - X_{t-\delta})},
\]
where we used the stationarity and zero means of $\{X_t\}$. Expanding the
numerator and employing~\eqref{ou_corr}, we obtain, for $k\ge 0$ and $\delta >
0$,
\begin{align*}
\E (X_t - X_{t-\delta}) (X_{t+k} & - X_{t+k-\delta} ) \\
  &= \E  X_t X_{t+k } -\E  X_t X_{t+k-\delta} - \E X_{t-\delta} X_{t+k} + \E  X_{t-\delta} X_{t+k-\delta}\\
 &= \sigma^2 \bigl(2 \rho(X_t, X_{t+k}) - \rho(X_t, X_{t+k-\delta})- \rho(X_t, X_{t+k+  \delta})\bigr)\\
 &= \sigma^2 \bigl(2e^{-\alpha k} - e^{-\alpha |k-\delta|}- e^{-\alpha (k+\delta)}\bigr).
\end{align*}
As a special case of this where $k=0,$ we have $\mbox{\rm Var} (X_t -
X_{t-\delta})= 2\sigma^2 (1 - e^{-\alpha\delta})$, so that
\[
\rho   (\Delta_\delta X_t  , \Delta_\delta X_{t+k}   )
 =\frac{2e^{-\alpha k} - e^{-\alpha |k-\delta|}- e^{-\alpha (k+\delta)}}{2(1 - e^{-\alpha\delta})}.
\]
\begin{figure}[h]  
\centering
   \includegraphics[width=8cm]{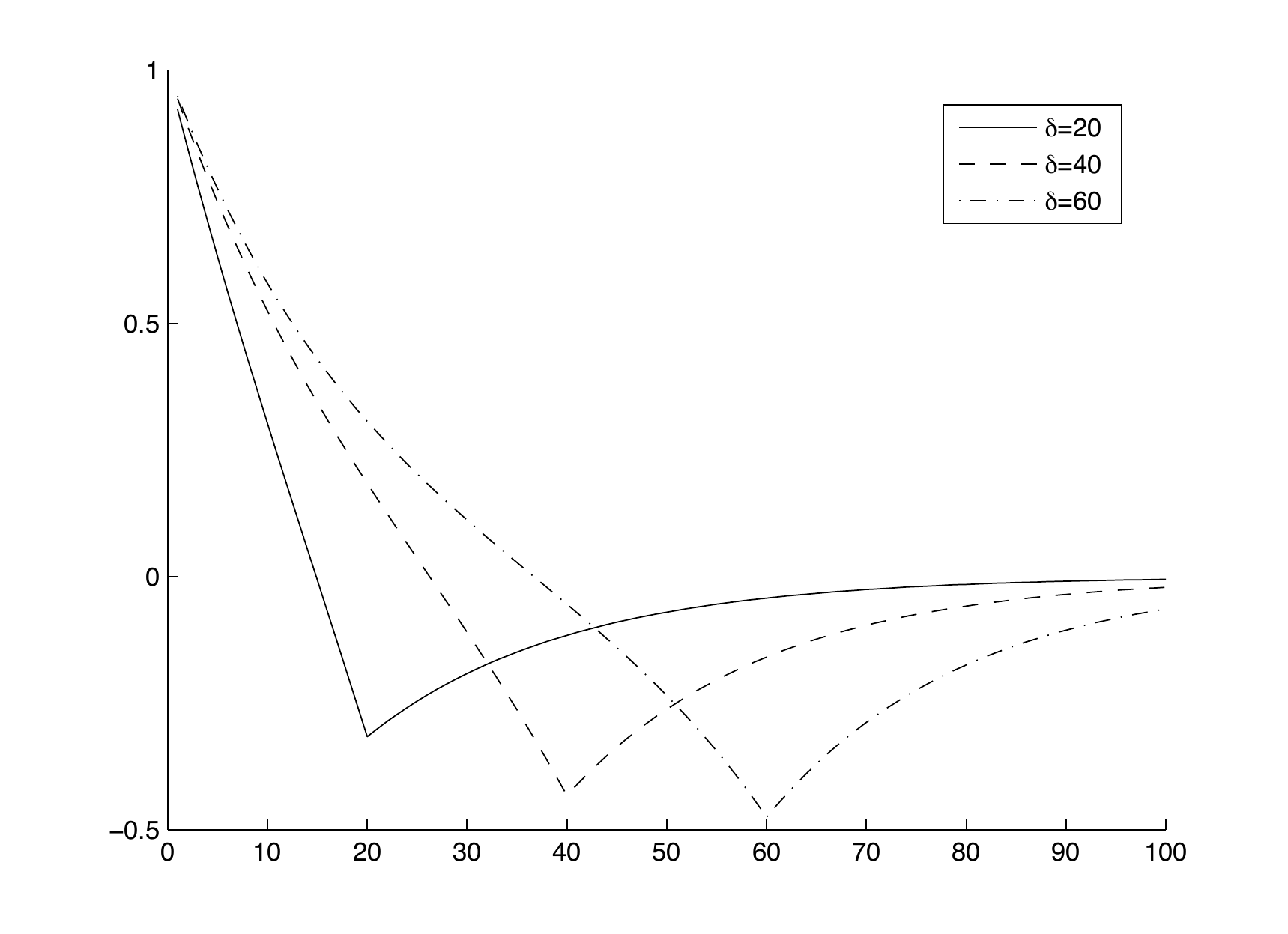}
 \caption{\small The plots of the correlation functions  $\rho (\Delta_\delta X_t, \Delta_\delta X_{t+k})$ in the case $\alpha=0.05,$
 for $\delta=20, 40$ and~60 days, plotted for $k\le 100$ (cf.\ Fig.~\ref{fig_incr_corr}). }
  \label{OU_diff_corr}
\end{figure}
The plots of this function of $k$ for three different values of $\delta$ are
presented in Fig.~\ref{OU_diff_corr}. They demonstrate virtually the same
behaviour as the empirical Spearman's rhos shown in Fig.~\ref{fig_incr_corr},
thus confirming that our model has property~{\bf [F$_4$]}.

Concerning~{\bf [F$_5$]}, we immediately see that, for our model $Y_t=h(X_t)$,
the vector $(Y_t, Y_{t+k})$ clearly has a Gauss copula. As it was the case when
dealing with~{\bf [F$_4$]}, showing that the copula for the vector of
increments $(\Delta_\delta Y_t, \Delta_\delta Y_{t+k})$ will be close to a
Gauss one, provided that the value of $\ep$ in~\eqref{h-h} is small, is a more
sophisticated task. However, that can be easily done adapting the proof of
Theorem~\ref{prop_2} and its extension to the case of bivariate distribution
functions. This is a straightforward technical exercise that we leave to the
interested reader.

\begin{rema}{\rm A possible  relatively simple modelling approach alternative to ours would be to use,
instead of~\eqref{ou1}, a stochastic driver of the form
\begin{equation}
 \label{ou_L}
dX_t = -\alpha X_t dt + dL_t,
\end{equation}
where $\{L_t\}$ is a suitably chosen L\'evy process. In particular, Theorem~3.2
in~\cite{HeLe} establishes the following result: {\em Let $\mu\in\R$ and
$\nu,\lambda>0$ be arbitrary,
\begin{align*}
 K_\nu (x) & = \frac12 \int_0^\infty y^{\nu -1}
 \exp \left\{-\frac{x}2 \left( y + y^{-1}\right)\right\}dy
  \\
  & \equiv \frac12 \int_{-\infty}^\infty
 \exp \left\{\nu u - x\cosh u\right\}du, \qquad x>0,
\end{align*}
be the modified Bessel function of the third kind, and $\{L_t\}_{t\ge 0}$ be a
L\'evy process with cumulant function
\[
\ln \E e^{iu L_1} = iu\mu
 - \lambda |u| \frac{K_{\nu/2-1} (\lambda |u|)}{K_{\nu/2}(\lambda |u|)},
  \qquad u\in \R\setminus \{0\}.
\]
Then the stochastic differential OU type equation\/~\eqref{ou_L}   has a weak
strictly stationary solution satisfying
\[
X_t = e^{-\alpha t} X_0 + e^{-\alpha t} \int_0^t e^{ \alpha s} dL_s
\]
and such that $X_t$ has marginal $t$-distribution $T(\nu, \lambda, \mu)$ with
density
\[
\frac{c(\nu, \lambda)}{[1+ ((x-\mu)/\lambda)^2]^{(\nu + 1)/2}}, \qquad x\in \R,
\]
where $c(\nu, \lambda) = (\lambda {\rm B}(\nu/2, 1/2))^{-1}$, ${\rm
B}(\cdot,\cdot)$ being the beta function. Moreover, if $\nu
>1$ then $\E X_t =\mu$, and if $\nu >2$ then $\mbox{\rm corr}\, (X_{t+\tau}, X_t)=
e^{-\alpha |\tau|}$.}

As the resulting process $\{X_t\}$ will already have power function decay of
the distribution tails, it might be possible to use that process for modelling
the financial data of interest, without transforming it (although some
transformation might still be needed, due to the symmetry of $t$
distributions). The advantage of using a transformed OU model of the type dealt
with in the present paper is that it is more flexible as its temporal
dependence structure can be {\em decoupled\/} from its distribution tail
properties.

}
\end{rema}

\section{Further comments on model fitting}
\label{sec_com}

First we observe that, without loss of generality, one can always put $\tau=1$
in~\eqref{ou1}: indeed, we clearly have $Y_t=\tilde h(\tilde X_t)$ with $\tilde
h(x):=h(\tau x)$ and $\tilde X_t:= \tau^{-1} X_t$ satisfying
\[
d \tilde X_t = -\alpha \tilde X_t dt +   dW_t,  \qquad t\ge 0.
\]
So in the ``driver" component~\eqref{ou1} of the model, one only needs to
estimate $\alpha$, and it is this task that will be discussed in the present
section.

Concerning the estimation of the function $h$, we will only point out that, in view of
its specific form dictated by Theorem~\ref{prop_1}, a parametric approach would be most
natural. One possible simple parametric class of functions consists of candidates of
the following spline type:
\[
h(x)= - a_1 e^{b_- x^2}  {\bf 1} (x\le x_1)
  +  ( a_2 + a_3 x ) {\bf 1} (x_1 < x\le x_2)
  +  a_4 e^{b_+ x^2} {\bf 1} (x>x_2),
\]
where ${\bf 1}(\cdot) $ denotes the indicator function, the bounds $x_i$ can be
estimated as the end points of the ``central part" of the distribution well
approximated by a normal one, $a_2$ and $a_3>0$ can be estimated from that ``central
part" (i.e.\ from a truncated sample), parameters $b_\pm >0$ can be estimated using
estimates for the tail regular variation indices (e.g.\ Hill estimators) and the
assertions of Theorem~\ref{prop_1} (note that, to use them, one has to
estimate~$\alpha$ first), and $a_1 $ and $a_4$ can be chosen to make the function $h$
continuous. Alternative parametric families may be more adequate.

Suppose that the process $\{Y_t, t\in[0,T]\}$ is observed at times
$t_k=k\epsilon$, $k=0,\hdots,n$, where $n+1$ is the total number of
observations. Without loss of generality, we can always assume that
$\epsilon=1$.

\subsection{An estimator based on rank correlations}


Since Spearman's rho is invariant under strictly increasing transformations of
data and $(X_j, X_k)$ is Gaussian, we have from~\eqref{rhorho} that
\[
\rho_S (Y_j, Y_k) = \rho_S (X_j, X_k) = \frac6{\pi} \arcsin \frac{\rho (X_j, X_k)}2
 =  \frac6{\pi} \arcsin \frac{e^{-\alpha |j-k|}}2,
\]
so that
\[
 \alpha  = -\frac{1}{|j-k|} \ln \left(2 \sin\left(\frac{\pi}6  \rho_S (Y_j, Y_k)\right)\right).
\]
Hence one can use the ``plug-in" estimators
\[
\hat\alpha_k:= -\frac{1}{k} \ln \left(2 \sin\left(\frac{\pi}6 \hat\rho_S (Y_0, Y_k)\right)\right), \qquad k=1,2,\ldots,
\]
where $\hat\rho_S (Y_0, Y_k)$ is calculated according to~\eqref{rhoSest}, for
the sample of pairs~$(Y_t, Y_{t+k})$. As discussed before, such an estimation
would require blocking of data, to reduce the effect of the slow trend
component in the time series. Alternatively, one can use Spearman's rho for the
increments of $Y_t$ (cf.\ our discussion following Theorem~\ref{prop_2}). The
values of $\hat\alpha_k$ for different $k$'s obtained from 50 day log-returns
for BHP data are shown in Table~\ref{tab_2}. Note the stability of the
estimates, which confirms the appropriateness of the model (according to which
the quantities are estimators of the same value).

\begin{table}[h]
\centering
\begin{tabular}{|c|c|c|c|c|c|c|}
\hline
    $k$ &  5 & 10 & 15 & 20 & 25 & 30 \\
     \hline
$\hat\alpha_k$  & 0.032  & 0.036 & 0.036 & 0.037 & 0.036 & 0.038\\
 \hline
\end{tabular}
 \caption{\small The values of $\hat\alpha_k$ from 50 day
 BHP stock log returns, for
 different values of~$k$.}
\label{tab_2}
\end{table}

\subsection{An alternative estimator}
\label{sect:est}

An alternative (but closely related) estimator for $\alpha$ is based on the following
simple observation concerning the background OU process, which we will formulate in the
form of a theorem (for its proof, see Section~\ref{Sec_proofs}). Set
$$
g(x) := -\ln\left[\sin\left(2\pi(x-1/4)   \right) \right], \qquad  x\in (1/4, 1/2).
$$

\begin{thm}\label{prop_3}
Let
\[
\hat{p}_k^0 := \frac{1}{n-k}\sum\limits_{j=0}^{n-k}\mathbf{1}(X_j>0, X_{j+k}>0)
\]
be an empirical estimate of $p_k:=\Pb(X_0>0, X_k>0)$. Then
\begin{equation}
\label{est}
\hat \alpha^0_k : =\frac{ g(\hat{p}^0_k)}{k}, \qquad k=1,2,\ldots,
\end{equation}
are strongly consistent and asymptotically normal\/ $($as $n\to\infty)$
estimators of $\alpha$ with
\begin{equation*}
{\rm Var}(\hat \alpha^0_k) \sim \frac{v^2_k (g'(p_k))^2}{n k^2},
\end{equation*}
where
\begin{equation}\label{variance}
v^2_k := p_k(1-p_k) + 2\sum\limits_{l=1}^\infty \left( \Pb(X_{k+l}>0, X_l>0, X_k>0,
X_0>0) - p_k^2 \right).
\end{equation}
\end{thm}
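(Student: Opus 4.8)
The plan is to reduce everything to a functional central limit theorem for the stationary, strongly mixing sequence of indicators $I_j:=\mathbf 1(X_j>0,\,X_{j+k}>0)$, $j=0,1,2,\ldots$, for each fixed $k$, and then transfer the conclusions to $\hat\alpha_k^0$ by the delta method. First I would record the elementary fact that links $p_k$ to $\alpha$: since $(X_0,X_k)$ is a centred bivariate Gaussian vector with correlation $\rho=e^{-\alpha k}$ by \eqref{ou_corr}, the orthant probability formula gives $p_k=\Pb(X_0>0,X_k>0)=\tfrac14+\tfrac1{2\pi}\arcsin\rho$. Solving for $\rho$ yields $\rho=\sin\bigl(2\pi(p_k-1/4)\bigr)$, so that $e^{-\alpha k}=\sin(2\pi(p_k-1/4))$, i.e. $\alpha=g(p_k)/k$ with $g$ as defined in the statement; note $p_k\in(1/4,1/2)$ for $\alpha>0$, so $g$ is being evaluated in its domain of definition, and $g$ is smooth there with $g'(p_k)=-2\pi\cos(2\pi(p_k-1/4))/\sin(2\pi(p_k-1/4))\neq0$. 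This shows $\hat\alpha_k^0=g(\hat p_k^0)/k$ is the natural plug-in estimator.

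Next I would establish the limit theory for $\hat p_k^0$. The process $\{X_j\}$, being a stationary Gaussian OU process with exponentially decaying correlation function, is strongly mixing with geometrically decaying mixing coefficients; since $I_j$ is a (bounded) measurable function of $(X_j,X_{j+k})$, the sequence $\{I_j\}_{j\ge0}$ inherits strong mixing with coefficients $\alpha_{I}(m)\le\alpha_X(m-k)$ for $m>k$, hence still geometrically decaying. Strong consistency of $\hat p_k^0\to p_k$ a.s. then follows from the ergodic theorem (the Gaussian OU process is ergodic), and by continuity of $g$ on $(1/4,1/2)$ we get $\hat\alpha_k^0\to\alpha$ a.s. For asymptotic normality I would invoke the classical CLT for bounded strongly mixing stationary sequences with summable mixing coefficients (e.g. Ibragimov's theorem), which gives $\sqrt{n-k}\,(\hat p_k^0-p_k)\Rightarrow N(0,v_k^2)$ with long-run variance $v_k^2={\rm Var}(I_0)+2\sum_{l\ge1}{\rm Cov}(I_0,I_l)$. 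Writing out ${\rm Var}(I_0)=p_k(1-p_k)$ and ${\rm Cov}(I_0,I_l)=\Pb(X_0>0,X_k>0,X_l>0,X_{l+k}>0)-p_k^2$ (using stationarity to shift the second pair), one recovers exactly the expression \eqref{variance}; absolute summability of the series holds because each covariance is bounded in absolute value by $4\alpha_X(l-k)$ for $l>k$, which is geometrically small. Finally, applying the delta method with the smooth map $p\mapsto g(p)/k$ gives $\sqrt{n-k}\,(\hat\alpha_k^0-\alpha)\Rightarrow N\bigl(0,\,v_k^2(g'(p_k))^2/k^2\bigr)$, and since $n-k\sim n$ this is the claimed ${\rm Var}(\hat\alpha_k^0)\sim v_k^2(g'(p_k))^2/(nk^2)$.

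The only genuinely delicate point is verifying the hypotheses of the mixing CLT cleanly — namely that the OU process's geometric $\alpha$-mixing passes to the block-indicator sequence $\{I_j\}$ and that this suffices for the CLT with the stated variance; the indicators are bounded so moment conditions are trivial, and geometric mixing is far more than summable, so this is routine but should be spelled out. One should also be slightly careful that $v_k^2>0$ so that the limiting normal law is nondegenerate (otherwise the "$\sim$" statement for the variance must be read with the convention that both sides are $0$); positivity can be argued from the strict positivity of the relevant Gaussian orthant probabilities, or simply noted as the generic case. Everything else — the orthant-probability identity, smoothness and nonvanishing of $g'$, the ergodic theorem for the OU process, and the delta method — is standard.
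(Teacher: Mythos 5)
Your proposal is correct and follows essentially the same route as the paper: the arcsine orthant-probability identity $p_k=\tfrac14+\tfrac1{2\pi}\arcsin(e^{-\alpha k})$ giving $\alpha=g(p_k)/k$, strong consistency via ergodicity of the OU process and continuity of $g$, asymptotic normality of $\hat p_k^0$ via the CLT for strongly ($\alpha$-)mixing sequences applied to the indicator sequence with the long-run variance \eqref{variance}, and finally the delta method with $g'(p_k)\neq 0$. Your extra remarks on summability of the covariances and nondegeneracy of $v_k^2$ are sensible refinements but do not change the argument.
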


Of course, in the context of our problem $\{X_t\}$ is unobservable, so we need
to replace $\hat p^0_k$ with an estimator for $p_k$ from $\{Y_t\}$. Denoting by
$m$ the median of $F$, we clearly have $\{X_j >0\}=\{Y_j>m\}$ and so
\[
\hat{p}_k^0 = \frac{1}{n-k}\sum\limits_{j=0}^{n-k}\mathbf{1}(Y_j>m   , Y_{j+k}>m ).
\]
We still do not know $m$, but one can use instead the empirical median $\hat
m_n (Y)$ of the sample $Y_0, Y_1, \ldots, Y_n$. It is not hard to verify that
$\hat m_n (Y)\stackrel{{\rm a.s.}}{\longrightarrow} m$ as $n\to\infty$, which
implies that
\[
\hat{p}_k := \frac{1}{n-k}\sum\limits_{j=0}^{n-k}\mathbf{1}(Y_j>\hat m_n(Y), Y_{j+k}>\hat m_n(Y) )
 \stackrel{{\rm a.s.}}{\longrightarrow} p_k.
\]
Therefore
\[
\hat \alpha'_k : =\frac{ g(\hat{p}_k)}{k}, \qquad k=1,2,\ldots,
\]
will also be strongly consistent estimators of~$\alpha$.

The values of $\hat\alpha'_k$ for different $k$'s obtained from 50 day
log-returns for BHP data are shown  in Table~\ref{tab_3}.

\begin{table}[h]
\centering
\begin{tabular}{|c|c|c|c|c|c|c|}
\hline
    $k$ &  5 & 10 & 15 & 20 & 25 & 30 \\
     \hline
$\hat\alpha'_k$  & 0.027  & 0.035 & 0.035 & 0.032 & 0.031 & 0.031\\
 \hline
\end{tabular}
 \caption{\small The values of $\hat\alpha'_k$ from 50 day BHP stock log returns, for
 different values of~$k$.}
\label{tab_3}
\end{table}

Simulations showed that the  root mean square errors  of the estimators $\hat p_k$ are
smaller than those for $\hat{p}_k^0$, so that from the viewpoint of the quadratic error
minimisation it is better, in a sense, not to know the true value of the median of the
stationary distribution~$F.$ Figure~\ref{fig:ratio} displays the values of the ratio of
the standard error of $\hat{p}_k^0$ to that of $\hat{p}_k$, for $k=20$,   calculated
from a sample of $10^4$ independent discretised trajectories of the OU process on the
time interval $[0,T]$, with $T=10^4$ and discretisation step $\epsilon = 1.$

\begin{figure}[ht]
\centering
\includegraphics[width=8cm]{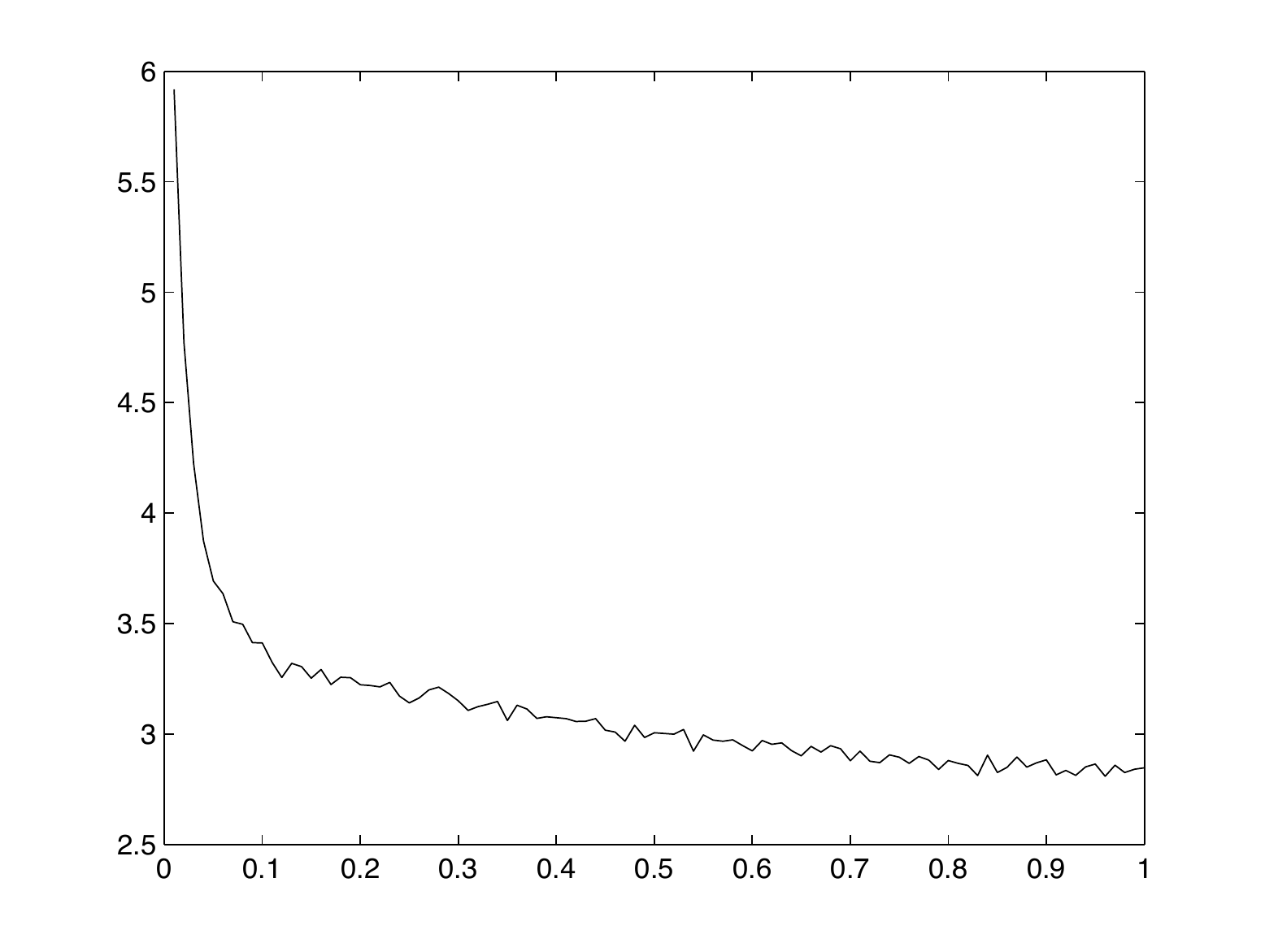}
\caption{The ratio of the root mean square errors of $\hat{p}_{20}^0$ and $\hat{p}_{20}
$ as a function of $\alpha$ calculated using crude Monte-Carlo simulations of the
respective OU process. }
 \label{fig:ratio}
\end{figure}

It can be seen from the plot that, as $\alpha$ increases, the value of the
ratio appears to decrease to some limiting value. That limiting value should
correspond to the case of independent observations $Y_j$, $j=0,1,2,\ldots, n$
(or $X_j$), in which the ratio can be calculated, as shows Theorem~\ref{prop_4}
below (its proof is given in Section~\ref{Sec_proofs}). Unfortunately, we were
not able to quantify the effect in the general case, for finite values of
$\alpha$, but a plausible ``common sense explanation" of it remains the same:
the ``quality" of the estimators $\hat{p}_k^0$ and $\hat{p}_k$ is basically
determined by how often the difference $X_j - a$ changes its sign, for $a=0$
and $a=\hat m_n(X)$, respectively, and in the latter case this occurs more
often.

For notational convenience, in Theorem~\ref{prop_4} we will consider samples
starting from $X_1$ (rather than $X_0$), so that here we will have
\begin{align*}
\hat{p}_k^0 &:= \frac{1}{n-k}\sum\limits_{j=1}^{n-k}\mathbf{1}(X_j>0, X_{j+k}>0), \\
\hat{p}_k   &:= \frac{1}{n-k}\sum\limits_{j=1}^{n-k}\mathbf{1}(X_j>\hat m_n(X),
X_{j+k}>\hat m_n(X) ),
\end{align*}
$\hat m_n(X)$ being the sample median for $X_1, X_2, \ldots, X_n$.

\begin{thm}
 \label{prop_4}
Let $X_1,X_2,\ldots, X_n$ be i.i.d.\ standard normal random variables and
$p_k:=\Pb(X_j>0, X_{j+k}>0)$, $k=1,2,\ldots$  Then, for any fixed~$k$,
$\hat{p}_k^0$ is an unbiased estimator of~$p_k=1/4$, $\hat{p}_k$ has a bias of
$O(n^{-1})$ as $n\to\infty$, and
\[
 {\rm Var\,}(\hat{p}_k^0)=\frac{5}{16n} + O(n^{-2}), \qquad
 {\rm Var\,}(\hat{p}_k) =\E (\hat{p}_k -p_k)^2 + O(n^{-2})=\frac{1}{16 n} + O(n^{-2})  .
\]
\end{thm}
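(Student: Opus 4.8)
\textbf{Proof proposal for Theorem~\ref{prop_4}.}

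The plan is to treat the two estimators in parallel, writing $\hat p_k^0$ and $\hat p_k$ as (normalized) sums of indicator variables and computing the first two moments. For $\hat p_k^0$ the random variables $I_j:=\mathbf 1(X_j>0,X_{j+k}>0)$ are Bernoulli$(p_k)$ with $p_k=1/4$ (by independence of $X_j,X_{j+k}$), so unbiasedness is immediate. For the variance one expands $\mathrm{Var}(\hat p_k^0)=(n-k)^{-2}\sum_{i,j}\mathrm{Cov}(I_i,I_j)$; since the underlying $X$'s are i.i.d., $\mathrm{Cov}(I_i,I_j)=0$ as soon as the index sets $\{i,i+k\}$ and $\{j,j+k\}$ are disjoint, i.e. for $|i-j|\notin\{0,k\}$. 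Thus only the diagonal terms ($\mathrm{Var}(I_j)=p_k(1-p_k)=3/16$) and the off-diagonal terms with $|i-j|=k$ survive. For the latter one needs $\Pb(X_j>0,X_{j+k}>0,X_{j+k}>0,X_{j+2k}>0)=\Pb(X_j>0,X_{j+k}>0,X_{j+2k}>0)=1/8$, so $\mathrm{Cov}=1/8-1/16=1/16$. Counting $n-k$ diagonal terms and $2(n-2k)$ terms with $|i-j|=k$ gives $\mathrm{Var}(\hat p_k^0)=(n-k)^{-2}\bigl[(n-k)\tfrac3{16}+2(n-2k)\tfrac1{16}\bigr]=\tfrac{5}{16n}+O(n^{-2})$, as claimed.

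For $\hat p_k$ the extra difficulty is that the threshold $\hat m_n(X)$ is itself sample-dependent, so the summands are no longer a simple function of a fixed pair of coordinates. The strategy here is a first-order (delta-method style) expansion around the true median $m=0$. Write $\hat m_n(X)=m+\Delta_n$ with $\Delta_n=O_P(n^{-1/2})$ (Bahadur representation: $\Delta_n\approx -\phi(0)^{-1}(F_n(0)-1/2)$ where $F_n$ is the empirical c.d.f., $\phi$ the standard normal density). Substituting into the indicator and Taylor-expanding the conditional probabilities in $\Delta_n$, one gets $\hat p_k=\hat p_k^0+(\text{linear term in }\Delta_n)+O_P(n^{-1})$, where the linear term involves $\frac{\partial}{\partial a}\Pb(X>a,\ldots)$ at $a=0$ times $\Delta_n$. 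The key cancellation to exploit is that the fluctuation of $\Delta_n$ is \emph{negatively correlated} with the fluctuation of $\hat p_k^0$ about $p_k$ — intuitively, when the sample median drifts up, more of the $X_j$ fall below it, pushing $\hat p_k$ down — and this correlation is exactly strong enough to reduce the variance from $5/(16n)$ to $1/(16n)$. Concretely, I would compute $\mathrm{Var}(\hat p_k)$ as $\mathrm{Var}(\hat p_k^0)+2\,\mathrm{Cov}(\hat p_k^0,\text{linear term})+\mathrm{Var}(\text{linear term})+O(n^{-2})$ and check that the cross term contributes $-2\cdot\frac{1}{4n}\cdot(\text{something})$ bringing the total to $\frac1{16n}$; the bias of order $O(n^{-1})$ drops out of the claimed leading-order statements. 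The computation can be organized by conditioning the indicator $\mathbf 1(X_j>\hat m_n)$ on the order statistic defining the median, or equivalently by using the joint asymptotic normality of $(\hat p_k^0,\ F_n(0))$ and then the linear transformation implied by the Bahadur expansion.

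The main obstacle I expect is making the expansion of $\hat p_k$ around the true median rigorous to the order $O(n^{-2})$ in the variance — one has to control the remainder in the Bahadur representation (which is $o_P(n^{-1/2})$ but whose square and cross-products with $O_P(1)$ quantities must be shown to be $O(n^{-2})$ in mean, requiring uniform integrability / moment bounds, not just stochastic order bounds) and to verify that the second-order Taylor term in $\Delta_n$ is genuinely $O(n^{-1})$ in the variance sense. A clean way around the worst of this is to note that, for i.i.d.\ standard normals, one can compute $\E(\hat p_k-p_k)^2$ more directly: condition on the pair ordering, or use the exact combinatorial identity expressing $\mathbf 1(X_j>\hat m_n, X_{j+k}>\hat m_n)$ in terms of the ranks of $X_j$ and $X_{j+k}$ among $X_1,\ldots,X_n$ (a two-dimensional analogue of the fact that a single $\mathbf 1(X_j>\hat m_n)$ equals $\mathbf 1(\mathrm{rank}(X_j)>\lceil n/2\rceil)$), reducing everything to a finite-sample rank computation whose second moment can be evaluated exactly and then expanded; this avoids delicate limiting arguments at the cost of a somewhat longer but purely algebraic calculation.
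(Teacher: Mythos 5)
Your treatment of $\hat p_k^0$ is correct and is essentially the paper's own computation: your covariance decomposition (only pairs with $|i-j|\in\{0,k\}$ contribute, with $3/16$ and $1/16$ respectively) reproduces exactly ${\rm Var}(\hat p_k^0)=5/(16 n_k)-k/(8n_k^2)$, $n_k=n-k$. The gap is in the harder half concerning $\hat p_k$, where what you offer is a plan rather than a proof: the decisive numbers are never produced. In your primary (Bahadur/delta-method) route you leave the cross term as ``$-2\cdot\tfrac1{4n}\cdot(\text{something})$'' and simply assert that the total comes to $\tfrac1{16n}$. Carried out, the linearisation reads $\hat p_k\approx\hat p_k^0+(F_n(0)-\tfrac12)$ (since $-\phi(0)\Delta_n\approx F_n(0)-\tfrac12$ in your notation), and one needs ${\rm Cov}\bigl(\hat p_k^0,\,F_n(0)\bigr)=-\tfrac1{4n}+O(n^{-2})$ together with ${\rm Var}(F_n(0))=\tfrac1{4n}$, giving $\tfrac5{16n}-\tfrac8{16n}+\tfrac4{16n}=\tfrac1{16n}$; none of this appears in your text. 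More seriously, the theorem asserts the variance and the quadratic error to precision $O(n^{-2})$, and, as you yourself flag, the Bahadur remainder is controlled only in probability or almost surely, not in $L_2$: converting $o_P(n^{-1/2})$ remainders into $O(n^{-2})$ contributions to second moments requires genuine additional moment/uniform-integrability arguments that you do not supply. The bias statement $\E\hat p_k-p_k=O(n^{-1})$ is likewise asserted, not proved.

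Your suggested fallback --- reducing $\mathbf 1(X_j>\hat m_n,\,X_{j+k}>\hat m_n)$ to a rank statement --- is in fact the paper's route, but you stop short of the one observation that makes it work and removes all asymptotics: by exchangeability, conditionally on the vector of order statistics the probability that $r$ fixed distinct indices all exceed the sample median $X_{\nu,n}$ is exactly $\binom{\nu-1}{r}\big/\binom{n}{r}$, independently of the sample values. This yields the exact finite-$n$ quantities $P_2=\tfrac{n-3}{4n}$, $P_3$, $P_4$ (odd $n$; analogously for even $n$), hence $\E\hat p_k=P_2=\tfrac14-\tfrac3{4n}$ (the bias) and $\E\hat p_k^2=P_4+n^{-1}(P_2+2P_3-3P_4)+O(n^{-2})$, from which ${\rm Var}(\hat p_k)=\tfrac1{16n}+O(n^{-2})$ follows by pure algebra. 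Until either this combinatorial step or the $L_2$ control of your expansion is actually carried out, the $\hat p_k$ half of the theorem remains unestablished in your proposal.
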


\subsection{An estimator based on band crossing times}

In the two previous subsections we considered estimation of the drift
coefficient $\alpha$ of the unobservable OU process $\{X_t\}$ from its
transformed version $\{Y_t=h(X_t)\}$ which was assumed to be observable. It is
interesting to note that one can estimate $\alpha$ even if $\{Y_t\}$ is only
partially observable itself\,---\,namely, if we only know the number of times
the process' trajectory crossed the band between two given levels $A$ and $B$
during a given time interval, provided that we also know the values of the
d.f.~$F(y)$ of the stationary distribution of $Y_t$ at the points $y =A,B$.

For $a,b\in\R$ denote by $T_{a,b}$ the time it takes the OU process $\{X_t\}$ driven
by~\eqref{ou1} with $\tau=1$ to transit from point~$b$ to point~$a$: slightly abusing
notation, one can write
\[
T_{a,b} := \inf\{t>0: X_t = a \mid X_0=b\}.
\]
The density of $T_{a,b}$ has a simple closed-form representation when
$a=0$~\cite{Sa}; in the general case, this is not so (one needs to invert a
Laplace transform expressed in terms of parabolic cylinder functions, see
e.g.~\cite{RiSa}). However, for our purposes it will be enough to know the
means of~$T_{a,b}$ only, and these are available, see~\cite{RiSa}.

By the excursion time $T_{(a),b}$ to $a$ from $b$ we will call the time it takes the OU
process starting from~$b$ to reach the point $a$ and then return back to~$b$. Clearly,
\[
T_{(a),b} \stackrel{d}{=} T_{a,b} + T_{b,a},
\]
where the random variables on the right-hand side are assumed to be independent.

Set $a:=h^{-1} (A) $, $b:= h^{-1} (B)$. Since $X_t\sim N(0, 1/(2\alpha))$ in
the stationary regime, one has $\Phi (\sqrt{2\alpha} a) = \Pb (X_t\le a)= \Pb
(h(X_t)\le A)=F(A)$ and likewise $ \Phi (\sqrt{2\alpha} b) =  F(B)$. Therefore
\[
 a= \frac{\Phi^{-1} (F(A))}{\sqrt{2\alpha}}=:\frac{a^*}{\sqrt{\alpha}}, \qquad
 b= \frac{\Phi^{-1} (F(B))}{\sqrt{2\alpha}}=:\frac{b^*}{\sqrt{\alpha}}.
\]

The standard scaling argument shows that the process $X^*_t:= \alpha^{1/2}
X_{t/\alpha}$ satisfies \eqref{ou1} with $\alpha=1,$ $\tau =1$ (and another
Brownian motion process, namely $W^*_t=\alpha^{1/2} W_{t/\alpha}$, instead of
$W_t$). Therefore one has
\[
T_{(a),b} \stackrel{d}{=} \alpha^{-1} T^*_{(a^*),b^*},
\]
where the asterisk indicates that the excursion time on the right hand side is
for the process $\{X^*_t\}$, the distribution of $T^*_{(a^*),b^*}$ being
independent of $\alpha$ and having a mean $m_{a^*,b^*}$ which can be computed
from $F(A)$ and $F(B)$ using the results of~\cite{RiSa}.

Now the strong Markov property implies that the subsequent excursions from $b$
to $a$ in $\{X_t\}$ (or, equivalently, from $B$ to $A$ in $\{Y_t\}$) form an
i.i.d.\ sequence. Therefore, if $N_{A,B} (T)$ denotes the number of times
$\{Y_t\}$ crosses the band between the levels $A$ and $B$ in a given direction
(say, from~$B$ to~$A$, each such crossing corresponding to an excursion
from~$B$ to~$A$) during the time interval $[0,T]$, the integral renewal theorem
asserts that, as $T\to\infty$,
\[
\frac{N_{A,B} (T)}{T}
 \stackrel{{\rm a.s.}}{\longrightarrow} \frac1{\E T_{(a),b}}
 =  \frac{\alpha}{m_{a^*,b^*}},
\]
which leads to another consistent estimator of $\alpha$ given by $ \hat
\alpha'':= m_{a^*,b^*} N_{A,B} (T) T^{-1}.$ In conclusion we note that one can
evaluate the mean quadratic error of the estimator $ \hat \alpha''$ from the
central limit theorem for renewal processes (the variance of  $T_{a,b}$ is also
available from~\cite{RiSa}).

\section{Proofs}
\label{Sec_proofs}

\begin{proof}[Proof of Theorem~\ref{prop_1}]
(i), (ii)~As part~(ii) is a ``mirror reflection" of part~(i), we only need to prove the
latter (in fact, part~(ii) was included in the theorem as a separate assertion only for
convenience of referencing to it in part~(iv)).

Set $x:=  h^{-1}(y).$ Since $X_t \sim N(0, \sigma^2)$, we can use Mills' ratio for the
normal tail and the relation $b_+\beta_+ = 1/(2\sigma^2)$ to write that, as
$y\to\infty,$
\begin{align}
\Pb(Y_t > y) &= \Pb( X_t >x) = \frac{\sigma}{x \sqrt{2\pi}}e^{-x^2/(2 \sigma^2)} (1+
o(1))
   \notag\\
 &= \frac{\sigma}{x\sqrt{2\pi}}\left(e^{b_+x^2}\right)^{-\beta_+}\left(1+ o(1)\right)
  = \frac{\sigma}{x\sqrt{2\pi}}\, e^{\beta_+ f_+ (x) + o(1)}(h(x))^{-\beta_+}
   \notag\\
 &= \frac{\sigma}{ \sqrt{2\pi}}\, e^{\beta_+ f_+ ( x) - \ln x  + o(1)}y^{-\beta_+}
\label{tail}.
\end{align}
It remains to show that $g(y):=e^{f_0 (x)}$, where $f_0 (x) =\beta_+ f_+ ( x) - \ln x$
and $x= h^{-1}(y),$ is a slowly varying function of $y$. To this end, fixing an
arbitrary  $\lambda >1$ and setting $s:= h^{-1} (\lambda y)-x $, we can write
\[
\frac{g(\lambda y)}{g(y)} = \exp\{f_0 (x + s) - f_0(x)\}.
\]
Since clearly $f_0'(x)= o(x)$ as $x\to \infty,$ it now suffices to show that $s=O(1/x)$
.

From the definition of $s$, the mean value theorem and the assumption $f'_+ (x) =o(x),$
we have
\begin{align*}
\lambda & = \frac{h(x+s)}{h(x)}= \exp\{ 2b_+ xs + b_+ s^2 + f_+ (x+s) - f_+ (x) + o(1) \}\\
 & = \exp\{ 2b_+ xs (1 + o(1)) + bs^2 +  o(1) \},
\end{align*}
which immediately implies that $s= O(1/x)$ and hence completes the proof of part~(i).

\smallskip

(iii)~We will only consider the case where {\bf [A$_+$]} holds true. The case of {\bf
[A$_-$]} is dealt with in the same way.

First observe that, given $Y_s=y_s$ (or, equivalently, $X_t = x_s :=h^{-1} (y_s)$), we
can write the increment of the process $\{Y_t\}$ on $[s,t]$ as
\[
Y_t - y_s = h(X_t ) - h(x_s)
 = h_1( X_t - m_{t-s} (x_s)) - h(x_s)  ,
\]
where, setting for brevity $m:=m_{t-s} (x_s)$, the function
\[
h_1 (x) = h (x + m)= \exp\{bx^2 + 2 bmx + m^2 + f(x+m) + o(1)\}
 =: e^{bx^2   + f_1(x ) + o(1)}
\]
is again of the form \eqref{fn_h}, with  $f_1 (x): =2 bmx + m^2 + f(x+m)$ clearly
satisfying the relation $f_1'(x) = o(x).$ Since, as we have already recalled, the
conditional distribution of $X_t - m_{t-s} (x_s)$ given $X_t = x_s$ is $N(0,
\sigma_{t-s}^2)$, the proof of~(iii) is completed by the same argument as used to
demonstrate the first part of the theorem.

\smallskip

(iv)~We will prove the first part of the assertion, assuming that {\bf [A$_+$]} is met
and $h(-x) = o(h(x))$ as $x\to\infty$. The second half will then immediately follow, in
view of the observations that $Y_t-Y_s = - (Y_s - Y_t)$ and $(Y_s, Y_t) \stackrel{d}{=}
(Y_t, Y_s)$, the latter being a consequence of the obvious relation $(X_s, X_t)
\stackrel{d}{=} (X_t, X_s)$ (for the
stationary OU process, of course). 

Letting $y\to\infty$ and $\ep=\ep (y)\downarrow 0$ slowly enough (so that, in
particular, $\ep y\to \infty$; further conditions on the behaviour of $\ep$ will appear
later), we can write
\begin{align}
\Pb (Y_t - Y_s >y)& = \int  \Pb (Y_s< z - y , \, Y_t \in dz)
 \notag\\
 &=\int_{-\infty}^{(1-\ep) y} +  \int_{(1-\ep) y}^{(1+\ep) y} +\int_{(1+\ep) y}^{ \infty}
 =: I_1 +  I_2 +I_3 .
 \label{III}
\end{align}
First consider
\begin{align}
I_1 &= \int_{-\infty}^{(1-\ep) y}  \Pb (Y_s< z - y ,\,  Y_t \in dz)
  \le \int_{-\infty}^{(1-\ep) y}  \Pb (Y_s< -\ep y ,\,  Y_t \in dz)
 \notag \\
 & = \Pb (Y_s< -\ep y ,\,  Y_t  \le (1-\ep) y)
   \le  \Pb (Y_s< -\ep y) = \Pb ( - h(X_s) > \ep y).
  \label{I_1}
\end{align}
In the case where $\inf_x h(x) >-\infty$ one clearly has $I_1=0$ for all large
enough~$y$. In the alternative case, the function  $v(w):= - h^{-1} (-w)$ will be
defined for all $w \in\R$ and have the property $v(w)\to\infty$ as $w\to\infty$.
Moreover, due to the assumption that $h(-x) = o(h(x))$ as $x\to\infty,$ there exists a
number $x_0\in\mathbb{R}$ and a continuous function $\eta (x)\ge 0$ vanishing at
infinity such that $h(-x) =-\eta (x) h(x)$ for all $x\ge x_0$; note that $\eta^* (x)
:=\sup_{u\ge x} \eta(u)\downarrow 0$ as $x\to\infty$. From that and the symmetry of the
distribution of $X_s$ we see that, for all large enough $y$, the last probability
in~\eqref{I_1} is equal to
\begin{align*}
\Pb ( - h(-X_s) > \ep y, X_s >v(\ep y))
 &= \Pb ( \eta (X_s) h(X_s) > \ep y,\, X_s >v(\ep y))
 \\
 & \le \Pb \bigl( \eta^* (v(\ep y)) h(X_s) > \ep y\bigr),
\end{align*}
and hence
\[
I_1 \le \Pb \left(  h(X_s) > \frac{\ep}{\eta^* (v(\ep y))}\, y\right).
\]
Now since we can assume that $\ep\to 0$ so slowly that $\ep/\eta^* (v(\ep y))\to\infty$
(as such a choice is clearly possible),  it follows from part~(i) that $I_1 =
o(\overline{F} (y))$.

Further, it is obvious from the Uniform Convergence Theorem for slowly varying
functions (see e.g.\ Theorem~1.1.2 in~\cite{BoBo}) that
\begin{equation}
 \label{equiv}
((1\pm \ep) y)^{-\beta_+  } L_+ ((1\pm\ep) y) = ( 1+ o(1)) y^{-\beta_+  } L_+ (y),
\end{equation}
and so from (i) one immediately obtains the bound
\begin{align}
I_2  &\le \Pb \bigl(Y_t \in ((1-\ep) y, (1+\ep) y]\bigr) = \overline{F} ( (1-\ep) y)
- \overline{F} ((1+\ep) y) \notag\\
 & = (1+o(1)) y^{-\beta_+  } L_+ (y) - (1+o(1)) y^{-\beta_+  } L_+ (y)
  = o(\overline{F} (y)).
 \label{I_2}
\end{align}

It remains to evaluate $I_3$. To this end, we note that, on the one hand, in view of
\eqref{equiv} and the result of part~(i), one has
\begin{align}
 \label{I_31}
I_3 
  \le \int_{(1+\ep) y}^{\infty} \Pb (Y_t \in dz)
 = \overline{F} ((1+\ep) y)  = (1+ o(1))  y^{-\beta_+  } L_+ (y),
\end{align}
while on the other hand,  as $1-y/z \ge \ep /(1 + \ep)$ for $z \ge (1 + \ep )y$,
\begin{align}
  \label{I_32}
I_3  \ge
 \int_{(1+\ep) y}^{\infty}  \Pb (Y_s/z< \ep /(1 + \ep) \,|\, Y_t =z) \,\Pb (Y_t \in dz).
\end{align}
Setting $u:=h^{-1} (z)$ and recalling that the conditional distribution of $X_s$ given
$Y_t= z$ (or, equivalently, $X_t =u$) coincides with the law of $W:=ue^{-\alpha (t-s)}
+ \sigma_{t-s}Z$, $Z\sim N(0,1),$ we see that the conditional probability in the last
integral is equal to
\begin{multline*}
\Pb \biggl( b_+ (W^2 - u^2)  + f_+ (W)
 - f_+ (u) + \theta (W) - \theta (u) < \ln\frac{\ep}{1+\ep}  \biggr)
 \\
  = \Pb \biggl( b_+ (2ue^{-\alpha (t-s)}\sigma_{t-s} Z +\sigma_{t-s}^2 Z^2)  + r(W,u)
   < b_+ u^2 (1 - e^{-2\alpha (t-s)}) +  \ln\frac{\ep}{1+\ep}  \biggr),
\end{multline*}
where $\theta (u)= o(1)$ as $u\to\infty$, $r(W,u):= f_+ (W)
 - f_+ (u) + \theta (W) - \theta (u)$. Recalling that  $f'_+ (x)=o(x)$ at
infinity, it is obvious that the last probability  tends to one uniformly in $ z\ge
(1+\ep) y $  as $y\to\infty$, provided that we choose $\ep \to 0$ slowly enough
(ensuring that $\sqrt{|\ln \ep |}\ll h^{-1} (y) $). This, together with~\eqref{I_31}
and~\eqref{I_32}, shows that $I_3 = (1+ o(1)) y^{-\beta_+  } L_+ (y)$ and so, in view
of the earlier bounds for $I_1$ and $I_2$, completes the proof of the theorem.
\end{proof}

\begin{proof}[Proof of Theorem~\ref{prop_2}]
Since, for a continuous random variable $X$,  $F_X (X)$ is uniformly distributed on
$(0,1)$, we see from~\eqref{rhoS}   that
\begin{equation}
 \label{rhoS1}
\rho_S (U_1, V_1) = 12 \bigl(\E F_{U_1} (U_1)F_{V_1} (V_1) -0.25\bigr).
\end{equation}
Now, using the standard argument we see  that, for any $x\in\R,$
\begin{align*}
F_{U_1} (x) &\ge  \Pb (U+ \xi\le x;\, \xi =0) \ge \Pb (U  \le x ) - \Pb ( \xi
\neq 0) \ge F_U(x) -\ep_1,\\
F_{U_1} (x) & \le \Pb (U+ \xi\le x;\, \xi =0) + \Pb ( \xi \neq 0)
 \le \Pb (U  \le x ) + \ep_1 =F_U(x) +\ep_1,
\end{align*}
and similarly for $F_{V_1}(x),$ so that
\[
\sup_x |F_{U_1} (x)-F_{U } (x)|\le \ep_1, \qquad \sup_x |F_{V_1} (x)-F_{V } (x)|\le
\ep_2.
\]
Therefore, using ${\mathbf 1}(A)$ for the indicator function of the event $A$, we have
\begin{align*}
F_{U_1} (U_1)F_{V_1} (V_1)
  & \le F_{U_1} (U_1)F_{V_1} (V_1) {\mathbf 1} (\xi=\eta=0) + {\mathbf 1} (\xi \neq 0)+ {\mathbf 1} (\eta \neq 0)\\
  & = F_{U_1} (U )F_{V_1} (V ) {\mathbf 1} (\xi=\eta=0) + {\mathbf 1} (\xi \neq 0)+ {\mathbf 1} (\eta \neq 0)\\
  &\le (F_{U } (U  ) +\ep_1)( F_{V } (V  )+\ep_2)   + {\mathbf 1} (\xi \neq 0)+ {\mathbf 1} (\eta \neq 0),
\end{align*}
so that
\begin{align*}
\E F_{U_1} (U_1)F_{V_1} (V_1)
  &\le  \E (F_{U } (U  ) +\ep_1)( F_{V } (V )+\ep_2) + \ep_1 + \ep_2\\
  &\le \E  F_{U } (U  )  F_{V } (V )  + \frac32(\ep_1 + \ep_2)  + \ep_1  \ep_2,
\end{align*}
where we used the simple fact that $\E  F_{U } (U  ) =\E F_{V } (V ) =1/2$. This,
together with a similar lower bound (that can be obtained in exactly the same way) and
relation~\eqref{rhoS1}, completes the proof of the theorem.
\end{proof}

\begin{proof}[Proof of Theorem~\ref{prop_3}]
Estimator \eqref{est} is obtained as a method-of-substitution (``plug-in") estimator
from the relation $\alpha = g(p_k)/k$ which is established by inverting
\begin{equation}
 \label{proba}
p_k = \frac{1}{4} + \frac{1}{2\pi}\arcsin\left(e^{-\alpha k}\right)\in (1/4, 1/2),
\qquad k\ge 1,
\end{equation}
the latter representation following e.g.\ from Theorem~5.35 in~\cite{McNeil}. Since
$\hat{p}^0_k \stackrel{\rm a.s.}{\longrightarrow} p_k$ as $n\to\infty$ due to the
ergodicity of the OU process (see e.g.\ Theorem 5.6 in~\cite{KarlinT}), the consistency
of $\hat \alpha^0_k$   follows immediately from the continuity of $g$ (see e.g.\
Theorem~1 in Section~1.5 of~\cite{Borov}).

The OU process is $\alpha$-mixing  with exponential rate~\cite{Protter,Masuda}.
Therefore  $\{Z_j:=\textbf{1}(X_j>0,X_{j+k}>0)\}$ is also $\alpha$-mixing with the same
rate, and so we can apply the central limit theorem for $\alpha$-mixing sequences (see
e.g. Theorem 27.4 in \cite{Bill}) to claim that the distribution of
$n^{1/2}(\hat{p}^0_k-p_k)$ converges weakly to $N (0,v^2_k)$ with
\begin{align*}
v^2_k &= \E (Z_0-p_k)^2  + 2\sum\limits_{l=1}^\infty \E (Z_0-p_k)(Z_l-p_k) \\
    &= p_k(1-p_k)+ 2 \sum\limits_{l=1}^\infty \left(\E Z_0 Z_l -p_k^2\right).
\end{align*}
Since $\E\, Z_0Z_l  = \Pb(X_0>0,X_k>0, X_l>0,X_{l+k}>0)$, this yields
representation~\eqref{variance}.

Now as $g$ is differentiable, $g'\neq 0$, the standard application of the delta-method
(see e.g.\ Theorem~3 in Section~1.5 of~\cite{Borov}) establishes that
\[
\sqrt{n}(g(\hat{p}^0_k)- g(p_k))/k = \sqrt{n}( \hat\alpha^0_k- \alpha)/k
\]
is asymptotically normal with zero mean and variance $v^2_k (g'(p_k))^2 k^{-2}$. The
theorem is proved.
\end{proof}

\begin{proof}[Proof of Theorem~\ref{prop_4}]
That $\hat{p}_k^0$ is an unbiased estimator of $p_k$ is obvious. To compute the
variance of $\hat{p}_k^0$, we set $n_k:=n-k$ and $ \mathbf{1}_i:= \mathbf{1}(X_i>0)$
for brevity and note that
\begin{align}
\E  (\hat{p}_k^0)^2
 &=  n_k^{-2} \E\left(
 \sum_{i=1}^{n_k} \mathbf{1}_i \mathbf{1}_{i+k}
 + 2 \sum_{i=1}^{n_k-1} \sum_{j=i+1}^{n_k} \mathbf{1}_i \mathbf{1}_{i+k} \mathbf{1}_j \mathbf{1}_{j+k}
 \right)
  \notag\\
 & = \frac{1}{4n_k} + \frac{2}{n_k^2}\left(\sum_{i=1}^{n_k-k} \sum_{j=i+1}^{n_k} \cdots + \sum_{i=n_k-k+1}^{n_k-1} \sum_{j=i+1}^{n_k}
 \cdots\right).
  \label{sumsa0}
\end{align}

For each $i$ in the first double sum in the last line the value $j=i+k$ will be
present in the inner sum, and it is clear that the corresponding term will be
equal to
\[
\E \mathbf{1}_i \mathbf{1}_{i+k} \mathbf{1}_{i+k} \mathbf{1}_{i+2k} = 1/8 ,
\]
while all the remaining terms will be equal to 1/16. In the second double sum, one
cannot have $j=i+k$, and so all the terms in it will be 1/16. Therefore
\begin{align}
\E  (\hat{p}_k^0)^2
 &= \frac{1}{4n_k} + \frac{2}{n_k^2}\left(\sum_{i=1}^{n_k-k}
  \left(\frac{n_k-i-1}{16} + \frac1{8}\right) + \sum_{i=n_k-k+1}^{n_k-1}
  \frac{n_k-i}{16}\right)
   \notag\\
  & = \frac{1}{4n_k} + \frac{2}{n_k^2}\left(\sum_{i=1}^{n_k-1} \frac{n_k-i}{16}
    +    \frac{n_k-k}{16}\right)
    \notag\\
    &= \frac1{16} + \frac5{16n_k} - \frac{k}{8n_k^2},
  \label{sumsa}
\end{align}
so that ${\rm Var\,} (\hat{p}_k^0) = \E  (\hat{p}_k^0)^2 - 1/16 = 5/(16n_k) -
k/(8n_k^2)$.

Now we turn to $\hat{p}_k$. Recall that $X_{1,n}\ge \cdots\ge X_{n,n}$ denote
the order statistics for our sample and let ${\mbox{\boldmath$X$}}:= (X_{1,n},
\ldots,X_{n,n})$, ${\mbox{\boldmath$x$}}:= (x_1, \ldots, x_n)\in\R^n$. Assume
for definiteness that $n$ is odd and set $\nu:=n/2+1/2$, so that $\hat m_n (X)=
X_{\nu,n}$. Then by symmetry and the total probability formula  one has
\begin{align*}
\E \hat{p}_k
 & =
 \frac{1}{n_k} \sum_{j=1}^{n_k} \Pb (X_j >\hat m_n (X), X_{j+k}>\hat m_n (X))\\
 & = \Pb (X_1 >\hat m_n (X), X_{2}>\hat m_n (X))
 \\
 & =  \int_{\R^n} \Pb(X_1>x_\nu, X_{2}>x_\nu \mid
{\mbox{\boldmath$X$}}= {\mbox{\boldmath$x$}})\Pb({\mbox{\boldmath$X$}}\in
d{\mbox{\boldmath$x$}}),
\end{align*}
where, again by virtue of symmetry, for ${\mbox{\boldmath$x$}}$ with $x_1
>x_2 >\cdots> x_n$,
\begin{align}
P_2 :& =
 \Pb(X_1>x_\nu, X_2>x_\nu \mid {\mbox{\boldmath$X$}}= {\mbox{\boldmath$x$}})
 \notag\\
 &= \Pb\left(\{X_1,X_2\}\subset \{X_{1,n},X_{2,n},\ldots, X_{\nu-1,n}\}\right)
  \notag\\
&= \frac{\binom{\nu -1 }{2}}{\binom{n }{2}}=  \frac{n-3}{4n}, \label{factor}
\end{align}
so that
\begin{equation}
 \label{p_k_mean}
\E \hat{p}_k =P_2= 1/4 - 3/(4n). 
\end{equation}

To find the second moment of $\hat{p}_k$, we use a representation similar to
~\eqref{sumsa0}, the total probability formula and a calculation similar
to~\eqref{sumsa}  to conclude that (keeping $k$ fixed)
\begin{align}
\E \hat{p}_k^2
  & =  \frac{1}{n_k } P_2 +\frac{2}{n_k^2}\left(\sum_{i=1}^{n_k-1}
 (n_k-i)P_4   +(n_k-k)(P_3-P_4)\right)
 \notag\\
 & = P_4 + n^{-1} (P_2 + 2P_3 - 3P_4) + O(n^{-2}),
 \label{p_k_seco}
\end{align}
where, similarly to~\eqref{factor},
\[
P_3 := \Pb(X_1>x_\nu, X_2>x_\nu , X_3>x_\nu \mid {\mbox{\boldmath$X$}}=
{\mbox{\boldmath$x$}}) = \frac{\binom{\nu -1 }{3}}{\binom{n }{3}}=  \frac{(n-3)(n-5)}{8n(n-2)}
\]
and
\[
P_4 := \Pb(X_1>x_\nu, X_2>x_\nu , X_3>x_\nu , X_4>x_\nu \mid {\mbox{\boldmath$X$}}=
{\mbox{\boldmath$x$}}) = \frac{\binom{\nu -1 }{4}}{\binom{n }{4}}= \frac{(n-5)(n-7)}{16 n(n-2)}.
\]
Now substituting the values of $P_r$ into \eqref{p_k_seco} we obtain that
\[
\E \hat{p}_k^2 = \frac1{16} - \frac{5}{16n} + O(n^{-2}),
\qquad {\rm Var\,} (\hat{p}_k) = \E \hat{p}_k^2- P_2^2= \frac{1}{16n} + O(n^{-2}),
\]
and the quadratic error $\E (\hat{p}_k- p_k)^2$ will have the latter form, too,
since $\E \hat{p}_k- p_k = O(n^{-1})$.

If $n$ is even, setting $\nu:=n/2 +1$ leads to the same expressions for $P_r$
in terms of $\nu$ and $n$, resulting in the values
\[
P_2= \frac{n-2}{4(n-1)}= \frac14 -\frac1{4n}+O(n^{-2}),
 \qquad P_3= \frac{n-4}{8(n-1)},
 \qquad P_4= \frac{(n-4)(n-6)}{16(n-1)(n-3)},
\]
so that
\[
\E \hat{p}_k^2 = \frac1{16} - \frac{1}{16n} + O(n^{-2}),
\qquad {\rm Var\,} (\hat{p}_k) = \E \hat{p}_k^2- P_2^2= \frac{1}{16n} + O(n^{-2}),
\]
as before, which completes the proof.
\end{proof}

\noindent {\bf Acknowledgements.} This research was supported by the ARC Centre of
Excellence for Mathematics and Statistics of Complex Systems (MASCOS).

\end{document}